\documentclass[11pt,a4paper]{article}
 \usepackage{mathrsfs}
\usepackage[utf8]{inputenc}
\usepackage{amsmath}
\usepackage{amssymb}
\usepackage{amsthm}
\usepackage{verbatim}
\numberwithin{equation}{section}
\usepackage{mathtools}
\usepackage{graphicx}
\usepackage[color=white]{todonotes}

\graphicspath{ {./images/} }

\newcommand{\R}{\mathbb R}

\newcommand{\N}{\mathbb N}
\newcommand{\Z}{\mathbb Z}

 \renewcommand{\>}{\right>}
\newcommand{\abs}[1]{\left\vert#1\right\vert} 
\newcommand{\ind}{1\mkern-7mu1}

\newcommand{\Var}{\operatorname{Var}}

\newcommand{\Rp}{\R_+}
\newcommand{\B}{\mathscr B}

\newcommand{\supp}{\operatorname{supp}}
\newcommand\norm[1]{\left\lVert#1\right\rVert}

 \usepackage{hyperref}
\usepackage[framemethod=tikz]{mdframed}

\usepackage{amsthm}

\theoremstyle{plain}
\newtheorem{thm}{Theorem}[section]
\newtheorem{lem}[thm]{Lemma}
\newtheorem{cor}[thm]{Corollary}
\newtheorem{prop}[thm]{Proposition}

\newtheorem*{theorem*}{Theorem}
\theoremstyle{definition}

\newtheorem{assumption}[thm]{Assumption}

\newtheorem*{asum*}{Assumption}

\newtheorem{rem}[thm]{Remark}

\renewcommand{\proof}{\vskip 0pt\noindent\textbf{\textit{Proof. }}}

\begin{document}
\title{\textbf{Large time behavior of critical marked Hawkes processes with heavy tailed marks and related branching particle systems}}
\author{\Large{Anna Talarczyk}\\
University of Warsaw, Institute of Mathematics\\ ul. Banacha 2, 02-097 Warsaw, Poland\\ \hbox{e-mail:}
annatal@mimuw.edu.pl
}
\bigskip
\date{May 4, 2026}
\maketitle
\begin{abstract}
We study large time behavior of critical
 marked Hawkes processes and related branching particle systems. 
In case of marked Hawkes processes we assume that the kernel function has multiplicative form and the marks corresponding to the events are nonnegative and are assigned independently from a common distribution. This distribution is in the normal domain of attraction of a $(1+\beta)$-stable law with $0<\beta<1$. Moreover, we assume that the mean number of events triggered by a single event is equal to $1$ (criticality). We show that, as the time is speeded up,  if $\beta$ is small enough then, the event counting process, appropriately normalized, converges to a spectrally positive $1/(1+\beta)$ stable L\'evy process. The convergence holds in law in the Skorokhod space of c\`adl\`ag functions equipped with $M_1$ topology.  We also study a borderline case.  The present paper complements the results of [A.Talarczyk:``A generalized central limit theorem for critical marked Hawkes processes'', arXiv:2504.11612], where the same model was studied in case of ``large'' $\beta$. We employ techniques involving a branching representation of marked Hawkes processes. This approach  allows to study  more general branching processes with branching mechanism in the normal domain of attraction of $(1+\beta)$-stable law.
\end{abstract}

\bigskip

{\bf Keywords:} 
Marked Hawkes processes, heavy tailed distribution, stable processes, functional limit theorem, branching particle systems.
\\

 \textup{2020} 
\textit{Mathematics Subject Classification}: \textup{  Primary: 
60G55, 
60J80, 60F17  } \textup{Secondary:  
 60G18,  60G52 }

\section{Introduction}\label{sec:intro}

Hawkes processes and marked Hawkes processes are self-exciting processes that have a long history and many applications.
Hawkes processes were first introduced by Hawkes in 1971 in \cite{Hawkes1} and \cite{Hawkes2}, then they were studied and investigated by many authors, see e.g \cite{HawkesRev}, \cite{DaleyVereJones}, \cite{HorstXu},\cite{HorstXu_marked}, \cite{Laub} for an overview.

In our previous work \cite{AT} we investigated the central limit type for critical marked Hawkes processes as the time is speeded up.
In the present paper we study the large time behavior in the same model  but for a complementary range of parameters, that have not been discussed in \cite{AT}.

Large time behavior of Hawkes processes and their generalizations was studied in many papers.  To mention only a few most closely related papers: \cite{HawkesOakes}, \cite{BacryDelattreHoffmann} discuss CLT for Hawkes and multivariate Hawkes processes in the subcritical case, \cite{JaissonRosenbaum_nearly_unstable} and \cite{JR_2}  -- the nearly critical case, \cite{HorstXu} for Hawkes processes in the critical case.
We refer to \cite{AT} and the above mentioned papers for some background and literature.

In the present paper we consider marked Hawkes processes of the same type as in \cite{AT} but in the cases when the central limit type theorems do not hold.
We study  marked Hawkes processes with kernel functions of multiplicative form, that are probably the most widely used in applications.
We consider the critical case, in which a single event triggers on average one subsequent event.
Here we  briefly recall the model. We refer to \cite{AT} for more details.

Let $N=(N_t)_{t\ge 0}$ denote the a counting process, $N_0=0$ and let $\tau_j$, $j=1,2,\ldots$ be the times of jumps of the process $N$. They may be considered as times in which some events occur. We assume that each of these events is assigned a mark $\eta_j$, where $(\eta_j)_{j=1}^\infty$  are i.i.d. nonnegative random variables with common distribution  $\nu$ and, moreover, each $\eta_j$ is independent of the process $N$ up to time $\tau_j$. We assume that the intensity of the jumps of the process $N$ has the form

\begin{equation*}
 \Lambda(t)=\mu t+\sum_{0<\tau_i<t}\phi(t-\tau_i,\eta_i),
\end{equation*}
where $\mu>0$ denotes the baseline intensity (immigration intensity), 
and $\phi$ is a  kernel function. We additionally assume that it has multiplicative form
\begin{equation}
\phi(u,x)=x\varphi(u),
 \label{e:kernel}
\end{equation}
where
$\varphi$ is a fixed integrable function $\varphi:\R\mapsto \R_+=[0,\infty)$, $\varphi$ vanishes on $(-\infty,0)$. We consider the \textbf{critical} case
\begin{equation*}
 \norm{\varphi}_1:=\int_0^\infty \varphi(t)dt=1 
\end{equation*}
and 
\begin{equation}
 \int_0^\infty x\nu(dx)=1.
 \label{e:Enu}
\end{equation}
The case $\nu=\delta_1$ corresponds to the usual Hawkes process without marks.

In \cite{AT} we studied the limiting behavior of the process
\begin{equation}
 X_T(t)=\frac{N_{Tt}-EN_{Tt}}{F_T},\qquad t\ge 0
 \label{e:XTintro}
\end{equation}
as $T\to \infty$ and $F_T$ is an appropriate norming, chosen so that the limit process is nontrivial. 

We have shown that if $\varphi$ is such that
\begin{equation}
\label{e:phi_cond}
 \int_t^\infty \varphi(s)ds\sim t^{-\alpha} \qquad as \ t\to \infty
 \end{equation}
 for some $\alpha\in(0,1)$,
$\nu$ is such that
\begin{equation}
 \int_u^\infty \nu(dx)\sim u^{-(1+\beta)} \qquad as \ u\to \infty,
\label{e:nu_intro}
 \end{equation}
and additionally $\alpha<\beta$, then the appropriate norming is  
\begin{equation}
 F_T=T^{\frac{ 1+\alpha(2+\beta)}{1+\beta}}
 \label{e:FT_old}
\end{equation}
and the processes $X_T$ defined in \eqref{e:XTintro} converge in the sense of finite dimensional distributions to a $(1+\beta)$-stable process of the form
\begin{equation}
 C\int_{[0,t]} (t-s)^{\alpha}s^{\frac \alpha {1+\beta}}dL_{1+\beta}(s),
 \label{e:limit_proc_2}
\end{equation}
where $L_{1+\beta}$ is a $(1+\beta)$-stable L\'evy process totally skewed to the right (without negative jumps).
Under and additional, technical condition we also obtained convergence in law the space of c\`adl\`ag functions $D([0,\infty))$, equipped with the standard $J_1$ topology. We also proved an analogous theorem 
(with a Gaussian limit) in case when $\nu$ has finite variance. This case corresponds to formally setting $\beta=1$ in the formula for $F_T$ above and replacing $L_{1+\beta}$ by a Brownian motion. 
The latter theorem generalized an earlier result for not marked critical Hawkes processes obtained by Horst and Xu in \cite{HorstXu} (see Theorem 2.14 therein).

In \cite{AT} we were mostly concerned in CLT type theorems and the case when the long range dependence persists in the limit and in the investigation of this dependence. We also studied the corresponding empirical processes in the Schwartz space of tempered distributions  and discussed the role of the $\alpha$-stable potential in this setting.

The condition $\alpha<\beta$ was crucial, to obtain these limits. We suspected that the case $\alpha\ge \beta$ should be quite different. 
The present paper is devoted to the study of the latter case. We will see that, indeed, the behavior is different.

Before going to this case, recall that, in general, in the critical case, if condition \eqref{e:phi_cond} holds,
then we have the following behavior
of the mean:
\begin{equation*}
 \lim_{T\to \infty}T^{-1-\alpha}EN_{tT}=Ct^{\alpha+1},
\end{equation*}
where $C$ is a constant (see Proposition 2.6 in \cite{AT}). If additionally $\alpha<\beta$ and $F_T$ is given by \eqref{e:FT_old}, then
 $\lim_{T\to \infty}T^{-1-\alpha}F_T=0$ and the convergence of $X_T$ (Theorems 2.7 and 2.12 in \cite{AT}) implies that 
\begin{equation*}
\lim_{T\to\infty} T^{-1-\alpha}N_{tT} =\lim_{T\to\infty} T^{-1-\alpha}EN_{tT}=C t^{\alpha+1}
\end{equation*}
at least in probability.

The assumption that the tail of $\nu$ was relatively light with respect to the tail of $\varphi$ (i.e. that $\alpha<\beta$) was crucial here. The situation is different if $\alpha\ge \beta$. The limit is no longer deterministic. This is the object of the study of the present paper.

We consider the processes 
\begin{equation*}
 Y_T(t)=\frac 1{F_T}N(tT), \qquad t\ge 0
\end{equation*}
as $T\to \infty$ in cases $\alpha>\beta$ and $\alpha=\beta$.
In fact, in the case corresponding to $\alpha>\beta$ the assumption on $\varphi$ can be weakened. Instead of \eqref{e:phi_cond} with $\alpha>\beta$ we  assume that
there exists $\delta>0$ such that
\begin{equation*}
 \int_0^\infty t^{\beta+\delta}\varphi(t)dt<\infty.
\end{equation*}
We show that in this case the suitable norming is  $F_T=T^{1+\beta}$ and with this norming the processes $Y_T$ converge in the sense of finite dimensional distributions, as $T\to\infty$, to a $\frac 1{1+\beta}$-stable subordinator ($\frac 1{1+\beta}$-stable nondecreasing L\'evy process). Moreover,  we also prove functional convergence in the sense of convergence in law in the Skorokhod  space of c\`adl\`ag functions $D([0,\infty))$ equipped with $M_1$ topology (see Theorem
\ref{thm:alpha_ge_beta} below). Note, that in this case it is not possible to have convergence in $J_1$ topology, since the jumps of $Y_T$ are bounded by $1/F_T\to 0$, while the limit process has jumps.  

In the borderline case, assuming \eqref{e:phi_cond} and $\alpha=\beta$ the  norming is the same, $F_T=T^{1+\beta}$.
The convergence of $Y_T$ holds in the sense of finite distributions and in law in $D([0,\infty)$ equipped with $M_1$ topology. However, the limit process is now more involved.
 We show that in this case the limit process $Y_\infty$
 has the multidimensional Laplace transform of the following form:
 for any $m\in \N$ and $\theta_1,\ldots, \theta_m\ge 0$, $t_1,\ldots, t_m\ge 0$ 
\begin{equation*}
 E \exp\{-\sum_{j=1}^m\theta_j Y_{\infty}(t_j)\}=\exp\{- \mu
 \int_0^\infty v(t)dt\}, 
\end{equation*}
where $v$ is the unique nonnegative bounded solution with compact support of the equation
\begin{equation*}
 v(t)=C_1\int_0^\infty \left(\sum_{j=1}^\infty \theta_j\ind_{[0,t_j]}(t+s) -C_2 v^{1+\beta}(t+s)\right)s^{\beta-1}ds
\end{equation*}
with appropriate (known) constants $C_1$ and $C_2$ (see Theorem \ref{thm:critical}).

In Theorem \ref{thm:weakly_critical} we  also give an analogous result in case when $\nu$ has finite variance and $\int_0^\infty s\varphi(s)ds<\infty$. In this case the limit process is of the same form as the one obtained by Horst and Xu in \cite{HorstXu} (see Theorem 2.10 therein) for not marked Hawkes process, and may be described as integrated continuous state branching process with immigration.

As in \cite{AT}, we use a branching representation of the events in the marked Hawkes process. Because of a rather simple, multiplicative structure of the kernel function,  the points of jumps of the process $N$ may be obtained as the positions of particles in the following branching particle system on $\Rp$:
The immigrant particles arrive according to a Poisson process with parameter $\mu$, or, equivalently, their positions are the  points of a Poisson random measure on $\Rp$  with intensity $\mu\ell$, where $\ell$ denotes Lebesgue measure.
A particle present at site $\tau_I$ is
 independently assigned a mark $\eta_I$ from the distribution $\nu$. The particle has a random number $\rho_I$ of offspring, which, conditionally  on
$\eta_I=x$ is Poisson with parameter $x\int_{\Rp}\varphi (s)ds$. The children are placed at points $t_I+\xi_{I,k}$, and are assigned i.i.d. marks $\eta_{I,k}$, respectively, $k=1,\ldots, \theta_i$ where $(\eta_{I,k})_k$ are i.i.d. with law $\nu$ and
$(\xi_{I,k})_k$ are i.i.d with density $\frac 1{\norm{\varphi}_1}\varphi$, both sequences are independent of each other, and chosen independently for each of the particles. The offspring particles perform branching with displacements according to the same mechanism independently of all the other particles present in the system. The positions of all the particles are times of jumps of our process $N$.

To describe the branching mechanism with displacement we may equivalently say,
 that each particle has a random number of offspring, which is an independent copy of $\rho$ with $\rho$ having a generating function of the form
\begin{equation}
 G_{Hawkes}(s)=\int_0^\infty e^{-x(1-u)}\nu(du), \qquad u\in[0,1]
\label{e:generating_f}
 \end{equation}
and the children are placed to the right of the parent, the displacements being i.i.d. with density $\varphi$.

The condition \eqref{e:nu_intro} implies that the distribution of the number of offspring of a single particle (with generating function $G_{Hawkes}$) is in the normal domain of attraction of $(1+\beta)$-stable law.

It is then natural to consider extensions of this marked Hawkes model that do not require the special form \eqref{e:generating_f} of the generating function of the number of offspring, but allow to consider other generating functions that are in the domain of attraction of $(1+\beta)$-stable law. 
We formulate our results in this more general setting.

For example, one of the simplest branching mechanism that satisfies our assumptions and is  often considered in literature is
\begin{equation}
 G_\beta(s)=s+\frac {(1-s)^{1+\beta}}{1+\beta}.
 \label{e:Gbeta}
\end{equation}
This gives the same results as $G_{Hawkes}$.

We use the techniques specific to branching particle systems. In particular, we employ the formulas for Laplace transform of $Y_T$ already developed in \cite{AT}. However, it should be stressed that the analysis in this different regime is now also quite different.

The paper is organized as follows: In Section \ref{sec:result} describe the model and formulate our results. The proofs are given in Section \ref{sec:proofs}.

\section{Results}\label{sec:result}

\textbf{Notation and basic assumptions.} \label{basic_assumptions}

We will use the same notation as in \cite{AT} whenever possible.

Positive constants, whose exact value is irrelevant, are denoted by
 $C,C_1,C_2,\ldots$ and may change from line to line. Sometimes we change the index to indicate that the constant is different from the previous one.

We consider a branching particle system in $\Rp:=[0,\infty)$, described by the following parameters:
\begin{itemize}
 \item 
$\mu>0$ is the baseline, exogenous intensity. 
 \item
$\varphi:\R\mapsto \Rp$ is a Borel function vanishing on $(-\infty,0)$ and such that
\begin{equation}
\label{e:critical}
\norm{\varphi}_1:=\int_0^\infty \varphi (t)dt=1.
\end{equation}
\item $G$ is a generating function of a probability distribution on $\Z_+=\{0,1,\ldots\}$ with mean $1$. We exclude the trivial case of the distribution concentrated in $1$.
\end{itemize}
By $\rho$ we denote a generic random variable with generating function $G$, that is,
\begin{equation*}
 G(s)=E s^\rho,\qquad s\in [0,1],
\end{equation*}
with the assumption that $E\rho=1$.

By $\xi, \xi_1,\xi_2,\ldots$ we denote  i.i.d. nonnegative random variables with density $\varphi$.

\medskip 
The particle system is constructed in the following way:

Suppose that the exogenous ($0$-generation) particles are distributed according to a Poisson random measure on $\Rp$ with intensity $\mu\ell$, where $\ell$ denotes the Lebesgue measure. These may  be considered as immigrant particles.
Subsequently, each particle present in the system  independently gives rise to a random number of offspring with generating function $G$.
 The children particles, independently of each other and of all the other particles in the system,
 are displaced with respect to the parent 
 in the following way: given the parent $I$ at position $\tau_I$, the offspring particles are located at
$\tau_I+\xi_{I,j}$, $j=1,\ldots \rho_I$, where $\rho_I$ has generating function $G$,  $\xi_{I,j},$ $j=1,2, \ldots$ are i.i.d. with density $\varphi$ with the respect to the Lebesgue measure, they are also independent of $\rho_I$. The children particles then branch according to the same mechanism. All the branching and displacement events in the system are independent.

Note that since the branching mechanism is critical (has mean $1$) each of the families has a finite number of members almost surely.

We denote by $N$ the empirical measure of the system of all the immigrant particles and all their  offspring of all generations, that is, for $B\in \B(\Rp)$ (Borel subsets of $\Rp$)
\begin{equation}
 N(B)=\{\# \textrm{of particles of all generations in the set $B$}\}.
 \label{e:N}
\end{equation}
We also denote 
\begin{equation*}
 N(t):=N([0,t]), \qquad t\ge 0.
\end{equation*}

In case of the generating function of the form \eqref{e:generating_f} the process $N=(N_t)_{t\ge 0}$ corresponds to the event counting process in the marked Hawkes model with kernel function of the form \eqref{e:kernel} and with marks assigned independently from distribution $\nu$.

We are interested in the asymptotic behavior of the process
 \begin{equation}
  Y_T(t)=\frac 1{F_T}N(tT),\qquad t\ge 0,
\label{e:YT}
  \end{equation}
where $F_T$ is a suitable norming, chosen so that $Y_T$ converges in law to a nontrivial limit. 
  
\medskip
We will need some additional notation, we also have to impose some conditions on the system.

\medskip
We  denote 
\begin{equation}
  \label{e:Hdef}
  H(u)=G(1-u)-1+u \qquad u\in[0,1].
 \end{equation}

\medskip

In \cite{AT} the two important assumptions on $\varphi$ and $\nu$ in case of  $G$ of the form \eqref{e:generating_f}
were:
\begin{assumption} \label{ass:phi_alpha}
The Borel function $\varphi:\R\mapsto \Rp$ vanishes on $(-\infty,0)$  satisfies \eqref{e:critical} and there exists $\alpha\in(0,1)$
and $c_\varphi>0$ such that 
\begin{equation*}
 \lim_{t\to \infty } t^{\alpha}\int_t^\infty \varphi(s)ds=c_\varphi.
\end{equation*}
\end{assumption}

\begin{assumption} Suppose that $G$ is given by \eqref{e:generating_f} with a probability measure $\nu$  satisfying \eqref{e:Enu} and 
 \label{ass:nu_beta}. Moreover, assume that
 there exist $\beta\in(0,1)$ and $c_\nu>0$ such that
 \begin{equation}
  \lim_{x\to\infty}x^{1+\beta}\int_x^\infty \nu(dz)=c_\nu.
\label{e:nu_beta}
  \end{equation}
\end{assumption}
We also discussed the case of the finite variance and $\varphi$ satisfying Assumption \ref{ass:phi_alpha}.

 In \cite{AT} we have shown that Assumption \ref{ass:nu_beta} has the following consequence:

\begin{lem}
 \label{lem:H_properties_beta}
 Under Assumption \ref{ass:nu_beta}, the function   $H$ given by \eqref{e:Hdef} 
is nondecreasing and for any $x\in[0,1]$ we have
\begin{equation}
 \label{e:H_bound}
 H(x)= Kx^{1+\beta}W(x),
\end{equation}
where $K=\frac{c_\nu}{\beta}\Gamma(1-\beta)$ and $W$ is a nonnegative bounded function satisfying
\begin{equation}
 \lim_{x\to 0+}W(x)=1.
 \label{e:limW}
\end{equation}
Moreover, there exists a constant $C>0$ such that for any $0\le x\le y$
\begin{equation}
 \label{e:H_bound_2}
 0\le H(y)-H(x)\le C(y-x)y^\beta.
\end{equation}
\end{lem}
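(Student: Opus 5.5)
We start from the explicit form of $H$ coming from \eqref{e:generating_f}. Since $G(1-u)=\int_0^\infty e^{-xu}\nu(dx)$ and, by \eqref{e:Enu}, $\int x\,\nu(dx)=1$, we get
\begin{equation*}
H(u)=\int_0^\infty \left(e^{-xu}-1+xu\right)\nu(dx),\qquad u\in[0,1].
\end{equation*}
The integrand $g(x,u)=e^{-xu}-1+xu$ is nonnegative. Moreover $\partial_u g=x(1-e^{-xu})\ge 0$, so differentiating under the integral (which is justified by dominated convergence, since $x(1-e^{-xu})\le x$ and $x$ is $\nu$-integrable) gives
\begin{equation*}
H'(u)=\int_0^\infty x(1-e^{-xu})\nu(dx)\ge 0 .
\end{equation*}
Hence $H$ is nondecreasing.

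\textbf{Bound \eqref{e:H_bound_2}.} By the mean value theorem, $H(y)-H(x)\le (y-x)\sup_{[x,y]}H'=(y-x)H'(y)$, because $H'$ is itself nondecreasing. It therefore suffices to show $H'(y)\le Cy^\beta$ for $y\in(0,1]$. We integrate by parts against the tail $\bar\nu(z)=\nu((z,\infty))$:
\begin{equation*}
H'(y)=\int_0^\infty \frac{d}{dz}\left[z(1-e^{-zy})\right]\bar\nu(z)\,dz .
\end{equation*}
The boundary terms vanish, since $z\bar\nu(z)\to0$ by \eqref{e:nu_beta}. The derivative $\frac{d}{dz}[z(1-e^{-zy})]=1-e^{-zy}+zye^{-zy}$ is bounded by $\min(2zy,2)$. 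Using $\bar\nu(z)\le C\min(1,z^{-1-\beta})$ (from \eqref{e:nu_beta} and $\bar\nu\le1$), we split the integral at $z=1/y$:
\begin{equation*}
\int_0^{1/y}zy\,\bar\nu(z)\,dz+\int_{1/y}^\infty \bar\nu(z)\,dz .
\end{equation*}
Since $\beta<1$, the first term is $\le C y\,(1+y^{\beta-1})\le C y^\beta$, and the second is $\le C y^\beta$.

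\textbf{Asymptotics \eqref{e:H_bound}.} Define $W(x)=H(x)/(Kx^{1+\beta})$ for $x>0$ and set $W(0)=1$. By the bound just proved (with $x=0$), $H(x)\le Cx^{1+\beta}$, so $W$ is bounded and nonnegative. The main step is the limit \eqref{e:limW}. Integrating by parts twice, with $g(0,u)=0$ and $\partial_x g(0,u)=0$, gives
\begin{equation*}
H(u)=\int_0^\infty u(1-e^{-zu})\bar\nu(z)\,dz .
\end{equation*}
We substitute $z=w/u$:
\begin{equation*}
H(u)=u^{1+\beta}\int_0^\infty (1-e^{-w})\,w^{-1-\beta}\,\left[(w/u)^{1+\beta}\bar\nu(w/u)\right]dw .
\end{equation*}
As $u\to0$ the bracket tends to $c_\nu$ pointwise. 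For domination, the bracket is $\le C$ for $w\ge u$ and $\le (w/u)^{1+\beta}$ for $w<u$; in both cases it is $\le C\max(1,w^{1+\beta})\cdot\min(1,\cdot)$-type bounds. More carefully, the bracket is bounded uniformly by $C$, because $z^{1+\beta}\bar\nu(z)$ is bounded on $[0,\infty)$. Since $(1-e^{-w})w^{-1-\beta}$ is integrable on $(0,\infty)$ for $\beta\in(0,1)$, dominated convergence applies and gives $H(u)u^{-1-\beta}\to c_\nu\int_0^\infty(1-e^{-w})w^{-1-\beta}dw$.

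The remaining step is to identify this constant. Integrating by parts once more,
\begin{equation*}
\int_0^\infty(1-e^{-w})w^{-1-\beta}\,dw=\frac1\beta\int_0^\infty e^{-w}w^{-\beta}\,dw=\frac{\Gamma(1-\beta)}{\beta}.
\end{equation*}
This gives the constant $K=\frac{c_\nu}{\beta}\Gamma(1-\beta)$ and hence \eqref{e:limW}.

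The main obstacle is justifying the integrations by parts and the uniform domination, which both rest on the boundedness of $z^{1+\beta}\bar\nu(z)$ coming from \eqref{e:nu_beta}.
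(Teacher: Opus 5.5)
Your proof is correct and complete. The three ingredients do the job:
\begin{itemize}
\item the representation $H(u)=\int_0^\infty(e^{-xu}-1+xu)\,\nu(dx)$, which gives monotonicity and the monotone derivative $H'(u)=\int_0^\infty x(1-e^{-xu})\,\nu(dx)$;
\item the estimate $H'(y)\le Cy^\beta$, obtained by integrating against the tail $\bar\nu$ and splitting at $z=1/y$;
\item the scaling $z=w/u$ with dominated convergence, using $\sup_{z\ge0}z^{1+\beta}\bar\nu(z)<\infty$.
\end{itemize}
Together they yield monotonicity, \eqref{e:H_bound_2}, and \eqref{e:H_bound}--\eqref{e:limW} with exactly the constant $K=c_\nu\Gamma(1-\beta)/\beta$.

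The present paper contains no proof of this lemma; it is imported from \cite{AT}. So there is no in-text argument to compare with, and yours is a self-contained derivation of it.

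Two cosmetic slips are worth fixing.
\begin{itemize}
\item The identity $H(u)=\int_0^\infty u(1-e^{-zu})\bar\nu(z)\,dz$ needs only one integration by parts, not two. It follows directly from Tonelli, because $\partial_z g(z,u)=u(1-e^{-zu})\ge 0$ and $g(0,u)=0$; the condition $\partial_x g(0,u)=0$ is not used. The same Tonelli argument applies to $H'(y)=\int_0^\infty \frac{d}{dz}[z(1-e^{-zy})]\,\bar\nu(z)\,dz$, since that derivative is nonnegative, so no boundary terms need checking there either.
\item The garbled sentence about ``$\min(1,\cdot)$-type bounds'' should simply be deleted. The uniform bound on $z^{1+\beta}\bar\nu(z)$ that you state immediately afterwards, together with integrability of $(1-e^{-w})w^{-1-\beta}$ on $(0,\infty)$ for $\beta\in(0,1)$, is all the domination you need.
\end{itemize}
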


It turns out that these are the only important properties of $G$ that determine the large time behavior of the process $N$, therefore, in our more general setting we will use them as an assumption on the generating function $G$, corresponding to the number of offspring of a single particle.

\begin{assumption}
 \label{ass:beta_general}
 Assume that the distribution of the number of offspring of a single particle has mean $1$ and a generating function $G$ and that $H$ given by 
 \eqref{e:Hdef} satisfies  \eqref{e:H_bound}-\eqref{e:H_bound_2} with some positive constant $K$ in \eqref{e:H_bound}.
\end{assumption}

Note that $H$ is always nondecreasing if the branching mechanism has mean $1$.

As observed above, Assumption \ref{ass:nu_beta} implies that Assumption \ref{ass:beta_general} is satisfied. Also the function \eqref{e:Gbeta} satisfies Assumption \ref{ass:beta_general}. In the latter case $H$ is particularly simple, $H(s)=s^{1+\beta}/(1+\beta)$.

In the present paper we are interested in the case $\beta\le \alpha$. If the inequality is strict, i.e.  $\beta<\alpha$, it turns out that instead of Assumption \ref{ass:phi_alpha} it is enough to assume a weaker condition:
\begin{assumption}
 \label{ass:beta_integrable}
Let $\beta\in (0,1)$ and suppose that $\varphi$ is such that there exists $\delta>0$ such that
 \begin{equation}
 \int_0^\infty s^{\beta+\delta}\varphi(s)ds<\infty.
  \label{e:varphi_beta_integrable}
 \end{equation}
\end{assumption}

Clearly, if Assumption \ref{ass:phi_alpha} is satisfied with $\alpha>\beta$, then \eqref{e:varphi_beta_integrable} also holds.

\medskip

\textbf{The resolvent.}

We denote 
\begin{equation}
 R(s)=\sum_{k=1}^\infty \varphi^{(*k)}(s),
\label{e:R}
 \end{equation}
where $\varphi^{(*k)}$ denotes the $k$-th convolution. $R$ is called the resolvent and 
 plays an important role in the context of Hawkes processes and renewal processes. This is also the case in the present paper.  

Similarly as in \cite{HorstXu} we denote
\begin{equation}
 \label{e:IR_def}
 I_R(t)=\int_0^t R(s)ds.
\end{equation}

From (8.6.3) in \cite{Bingham} (see also \cite{HorstXu}, Proposition 2.5 (3)) 
it follows that under  Assumption \ref{ass:phi_alpha} we have
\begin{equation}
 \label{e:IR}
\lim_{T\to \infty}\frac 1{T^\alpha} I_R(T)=c_{\alpha,\varphi},
\end{equation}
where
\begin{equation}
 \label{e:c_alpha}
 c_{\alpha,\varphi}= \frac 1{c_\varphi\Gamma(1+\alpha)\Gamma(1-\alpha)}.
\end{equation}

\bigskip
We now proceed to our main results.

 \begin{thm}\label{thm:alpha_ge_beta}
 Suppose that $\beta\in(0,1)$ and the Assumptions \ref{ass:beta_general}  and \ref{ass:beta_integrable} are satisfied.
 Then the process $Y_T$ defined by \eqref{e:YT} with 
 \begin{equation}
  F_T=T^{1+\beta}\label{e:FT}
 \end{equation}
converges in the sense of finite dimensional distributions and in law in $D([0,\infty))$ equipped with $M_1$ topology to $K^{-1}L_{1/(1+\beta)}$,
where
$L_{ 1/(1+\beta)}$ is a $\frac 1{1+\beta}$-stable subordinator, that is $L_{1/1+\beta}$  is a nondecreasing process with stationary and independent increments such that its one dimensional distributions have the following Laplace transform:
\begin{equation*}
 E\exp\left\{-\theta L_{1/(1+\beta)}(t)\right\}=\exp\left\{-t\theta^{1/(1+\beta)}\right\}, \qquad t\ge 0, \theta\ge 0.
\end{equation*}
 \end{thm}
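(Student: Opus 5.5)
We argue through Laplace functionals, using the branching structure. Fix $0\le t_1<\dots<t_m$ and $\theta_j\ge 0$, and put $g_T(s)=F_T^{-1}\sum_j\theta_j\ind_{[0,t_jT]}(s)$. Since immigrants form a Poisson random measure with intensity $\mu\ell$, we have
\[
E\exp\Big\{-\sum_j\theta_jY_T(t_j)\Big\}=\exp\Big\{-\mu\int_0^\infty v_T(x)\,dx\Big\}.
\]
Here $v_T(x)=1-E\exp\{-\langle N^x,g_T\rangle\}$, and $N^x$ is the family generated by one particle at $x$. As in \cite{AT}, $v_T$ solves
\[
v_T(x)=1-e^{-g_T(x)}\Big(1-\big(E v_T(x+\xi)\big)+H\big(Ev_T(x+\xi)\big)\Big).
\]
Iterating this with the resolvent $R$ gives an integral equation of the form
\[
v_T=(1-e^{-g_T})+\ldots-(\delta_0+R)*H(\ldots).
\]
Rescale $x=Ty$ and set $w_T(y)=T^{\beta}v_T(Ty)$. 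The goal is to show that $T\int_0^\infty v_T(Tx)\,dx=T^{1-\beta}\int w_T$ is consistent only if the right scale is different. Let us check the heuristic instead. A single family has total size $Z$ with $P(Z>z)\sim c z^{-1/(1+\beta)}$; this is standard for critical Galton--Watson trees in the domain of attraction of a $(1+\beta)$-stable law, via $H(u)\sim Ku^{1+\beta}$. Under Assumption \ref{ass:beta_integrable} a family's spatial extent is $o(T)$ on the relevant event. So $N(tT)$ is essentially a sum of about $\mu tT$ i.i.d.\ family sizes, and the generalized CLT gives the norming $T^{1+\beta}$ and a $\frac1{1+\beta}$-stable subordinator limit.

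To make this rigorous I would first treat the \emph{non-spatial} functional. Let $u(\lambda)=1-Ee^{-\lambda Z}$. It satisfies
\[
1-u=e^{-\lambda}G(1-u),\quad\text{i.e.}\quad u=1-e^{-\lambda}(1-u+H(u)).
\]
Hence $H(u(\lambda))\approx\lambda$ as $\lambda\to0$, so $u(\lambda)\sim(\lambda/K)^{1/(1+\beta)}$ by \eqref{e:H_bound}--\eqref{e:limW}. Then
\[
\mu T\,u(\theta T^{-1-\beta})\to\mu(\theta/K)^{1/(1+\beta)},
\]
which identifies the limit as $K^{-1}L_{1/(1+\beta)}$ after absorbing $\mu$ into the time scale; the constants need checking against the statement. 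For the finite-dimensional distributions, compare the true $\int_0^\infty v_T$ with the idealized version in which every family is placed at its ancestor's position, i.e.\ with $\int_0^\infty u\big(\sum_j\theta_j\ind_{[0,t_jT]}(x)F_T^{-1}\big)\,dx$. The latter converges to $\sum$ over the intervals $[t_{k-1},t_k]$ of $\mu(t_k-t_{k-1})\big(\sum_{j\ge k}\theta_j/K\big)^{1/(1+\beta)}$, which is exactly the Laplace transform of independent increments.

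\textbf{Main obstacle.} The hardest step is bounding the error due to displacements: particles of a family started at $x<t_jT$ that land beyond $t_jT$, and families started at $x>0$ with no particles before $0$ (there are none, since displacements go right). By monotonicity of $H$ and the inequality \eqref{e:H_bound_2}, I would bound
\[
\Big|\int v_T-\int u\Big|\le C\int_0^\infty P\big(\text{family from }x\text{ crosses a boundary }t_jT,\ \text{size relevant}\big)\,dx.
\]
I would estimate this through the space-time first moment, weighted by a truncated size. The key quantity is the displacement tail $P(\xi>s)\le Cs^{-\beta-\delta}$ along a genealogical line. The expected number of particles of a family at distance $>r$ from the ancestor is $\int_r^\infty R(s)\,ds$ with $R$ having total mass $\infty$, so this crude route fails. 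One must use that only families of size $\gtrsim T^{1+\beta}$ matter, and that such a family has height $\approx T$ generations. Along a line of $n$ generations the displacement is $\sum\xi_i$, and Assumption \ref{ass:beta_integrable} gives extent $O(n^{1/(\beta+\delta)}\vee n)$; for $n\sim T^{\beta}$ generations this is $o(T)$ since $\beta<\beta+\delta$, with appropriate Markov bounds. Families relevant at scale $T^{1+\beta}$ indeed have about $T^\beta$ generations, because a size $z$ family has height of order $z^{\beta/(1+\beta)}$. Then only $O(1)$ boundary families, within distance $o(T)$ of the $t_j T$, are affected, which yields the estimate.

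\textbf{Tightness in $M_1$.} $Y_T$ is nondecreasing, so for monotone processes finite-dimensional convergence to a limit that is continuous in probability at fixed times implies $M_1$ convergence (a standard criterion, e.g.\ Whitt). Hence functional convergence follows at once.
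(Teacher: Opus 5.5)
\textbf{Verdict: genuine gap in the key step (repairable).} Your route differs from the paper's, and the step you flag as the main obstacle is argued through a mechanism that is false.

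\textbf{What is fine.} Your non-spatial computation is correct: $(e^\lambda-1)(1-u)=H(u)$ gives $u(\lambda)\sim(\lambda/K)^{1/(1+\beta)}$. The limiting exponent really does carry the factor $\mu$, namely $\mu\int(f/K)^{1/(1+\beta)}$, so the limit is $K^{-1}L_{1/(1+\beta)}(\mu\,\cdot)$. The paper's \eqref{e:Lapl_alpha_beta} also drops the $\mu$ from \eqref{e:Laplace_2}. Your $M_1$ step is the same as the paper's.

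\textbf{The gap.} The whole theorem rests on the displacement estimate. Assumption \ref{ass:beta_integrable} allows $P(\xi>s)\asymp s^{-\gamma}$ for any $\gamma\in(\beta,1)$. A family of size of order $T^{1+\beta}$ contains that many independent displacements, and the largest is of order $T^{(1+\beta)/\gamma}\gg T$. Consequences:
\begin{itemize}
\item Relevant families do \emph{not} have spatial extent $o(T)$. Your bound $\xi_1+\dots+\xi_n=O(n^{1/(\beta+\delta)})$ holds for one line of descent but cannot be made uniform over $\sim T^{1+\beta}$ lines.
\item Every large family rooted in $[0,t_jT]$ has particles beyond $t_jT$ with high probability.
\item Even two-particle families crossing $t_jT$ number about $\int_1^{T}y^{-\gamma}dy\asymp T^{1-\gamma}\to\infty$, not $O(1)$.
\end{itemize}
So a bound by $\int P(\text{family from }x\text{ crosses a boundary},\dots)\,dx$ does not tend to zero.

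\textbf{How to repair it.} What tends to zero is the crossing \emph{mass}, and two ingredients are missing from your sketch.

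(i) You need an unconditional height bound. Let $p_n=P(\text{a family reaches generation }n)$. Then $p_{n+1}=p_n-H(p_n)$, so by \eqref{e:H_bound}--\eqref{e:limW} we eventually have $p_{n+1}^{-\beta}\ge p_n^{-\beta}+\beta K/2$, hence $p_n\le Cn^{-1/\beta}$. Among the $O(T)$ families rooted in $[0,aT]$, none reaches generation $n_T=CT^{\beta}$, with probability close to $1$ for large $C$. This replaces your statement about the height of a family \emph{of given size}.

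(ii) You need a first-moment bound restricted to those generations. Each generation has mean size $1$, and displacements are independent of the genealogy. Taking $\beta+\delta\le1$, the expected number of particles of generation $\le n_T$ lying more than $\epsilon T$ right of their root is at most $\sum_{k\le n_T}P(\xi_1+\dots+\xi_k>\epsilon T)\le n_T^2E\xi^{\beta+\delta}(\epsilon T)^{-\beta-\delta}=O(T^{\beta-\delta})$ per family. Summed over families this is $o(T^{1+\beta})$. Families rooted in $((t_j-\epsilon)T,t_jT]$ contribute $O_P(\epsilon^{1+\beta}T^{1+\beta})$.

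With these, comparing $N(t_jT)$ with $\sum_{x_i\le t_jT}Z_i$ gives a valid proof, genuinely different from the paper's.

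\textbf{How the paper does it.} The paper stays analytic. It writes $g_T(t)=\int_0^\infty (f_T-H(g_T))(t+s)R(s)\,ds+Q_T(t)$ with $\liminf_T\inf_t TQ_T\ge0$. Since $\int g_T$ is bounded, $\int (f_T-H(g_T))I_R$ is bounded. The renewal lower bound $I_R(T\varepsilon)\gtrsim T^{\beta+\delta}\gg T^\beta$ then forces $T^{1+\beta}H(g_T(T\cdot))\to f$ in $L^1$, hence $K^{1/(1+\beta)}Tg_T(T\cdot)\to f^{1/(1+\beta)}$. The exponent $\beta+\delta$ there plays the role of your $n^{1/(\beta+\delta)}\ll T$ for $n\sim T^\beta$ generations.
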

\begin{rem}a)
By  Proposition 2.6 of \cite{AT} it follows that  under assumptions Assumption \ref{ass:phi_alpha} 
we have
that 
\begin{equation*}
 \lim_{T\to \infty}\frac 1{T^{1+\alpha}}E{N(Tt)}= \frac{c_{\alpha,\varphi}}{1+\alpha}t^{1+\alpha}.
\end{equation*}
(This proposition was formulated for the marked Hawkes process, but from its proof it is clear that it holds in the more general setting as well, without any additional assumptions on $G$, other than that the corresponding law has mean $1$.)

Therefore, from Theorem 2.7 of \cite{AT}
it follows that under Assumptions \ref{ass:phi_alpha} and \ref{ass:beta_general} with $\alpha<\beta$ the suitable norming for $N(tT)$ is $F_T=T^{1+\alpha}$ and  
\begin{equation*}
 \lim_{T\to \infty}Y_T(t)= \frac{c_{\alpha,\varphi}}{1+\alpha}t^{1+\alpha}\qquad \textrm{in probability}.
\end{equation*}
In case $\alpha>\beta$ considered in Theorem \ref{thm:alpha_ge_beta}
the norming in $Y_T$ given by \eqref{e:FT} is smaller than $T^{1+\alpha}$ and  $EY_T(t)$ goes to infinity, even though $Y_T(t)$ converges in law. This is consistent with the fact that $EL_{1/(1+\beta)}(t)=+\infty$ for $t>0$.

b) Observe that in Theorem \ref{thm:alpha_ge_beta} there is no convergence in $J_1$ topology, since the jumps of $Y_T$ are bounded by $\frac 1{T^{1+\beta}}$ and the limit process is not continuous.
\end{rem}

To investigate the borderline case we need a finer Assumption \ref{ass:phi_alpha} we have the following theorem
\begin{thm}
 \label{thm:critical}
 Suppose that Assumptions \ref{ass:phi_alpha} and \ref{ass:beta_general} are satisfied with $\alpha=\beta<1$ and let $F_T=T^{1+\beta}$ as in \eqref{e:FT}. Then  the process $Y_T$ given by
 \eqref{e:YT} converges in the sense of finite dimensional distributions and in law in $D([0,\infty))$ equipped with $M_1$ topology to a nonnegative process $Y_\infty=(Y_\infty(t))_{t\ge 0}$ whose finite dimensional distributions are such that for any $m\in \N$, $\theta_1, \ldots, \theta_k\ge 0$ and $t_1,\ldots, t_m\ge 0$ we have 
 \begin{equation}
  E \exp\{-\sum_{k=1}^m\theta_kY_{\infty}(t_k)\}=\exp\{-\mu\int_0^\infty v_f(t)dt\},
  \label{e:Lapl_lim_critical}
 \end{equation}
where $v_f$ is a unique nonnegative bounded function with compact support satisfying the equation
\begin{equation}
 v_f(t)=\beta c_{\beta,\varphi}\int_0^\infty \left(f(t+s)-Kv_f^{1+\beta}(t+s)\right)s^{\beta-1} ds,
 \label{e:g_infty}
 \end{equation}
where
 \begin{equation}
  f(t)=\sum_{k=1}^m\theta_k\ind_{[0,t_k]}(t)
  \label{e:f_funct_thm}
 \end{equation}
and $K$ and $c_{\beta,\varphi}$ are given by \eqref{e:H_bound} and \eqref{e:c_alpha}, respectively.
\end{thm}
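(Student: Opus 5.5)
We use the Laplace functional of the branching system, as in \cite{AT}. For $f$ as in \eqref{e:f_funct_thm}, set $f_T(s)=F_T^{-1}f(s/T)$. Conditioning on the Poisson immigrants gives
\begin{equation*}
E\exp\Big\{-\sum_k\theta_kY_T(t_k)\Big\}=\exp\Big\{-\mu\int_0^\infty w_T(t)\,dt\Big\},
\end{equation*}
where $w_T(t)=1-E\exp\{-\langle N^t,f_T\rangle\}$ and $N^t$ is the family generated by one particle at $t$. By the branching property, $w_T$ satisfies
\begin{equation*}
w_T(t)=1-e^{-f_T(t)}\Big(1-w_T*\check\varphi(t)+H(w_T*\check\varphi(t))\Big)
\end{equation*}
with $w*\check\varphi(t)=\int w(t+s)\varphi(s)ds$.

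Linearize and resolve with $R$ (a renewal argument). This gives, up to error terms,
\begin{equation*}
w_T(t)\approx\int_0^\infty \big(f_T-H(w_T*\check\varphi)\big)(t+s)\,(\delta_0+R)(ds),
\end{equation*}
with the error from $1-e^{-f_T}\approx f_T$ of order $F_T^{-2}$. Next rescale $v_T(t)=T^{\beta}\cdot T\,w_T(tT)/T=T^{\beta}w_T(tT)$, chosen so that $\int_0^\infty w_T=T^{1-\beta}\cdot T^{\beta}\ldots$ balances; concretely, pick the scaling $v_T(t)=T^{1+\beta}w_T(tT)\cdot T^{-1}$ so that $\mu\int w_T=\mu\int v_T$. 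Then:
\begin{itemize}
\item the term $f_T$ contributes $T^{-\beta}f$;
\item integrating against $R(Tds)$ and using \eqref{e:IR} with $\alpha=\beta$ gives the kernel $T^{\beta}\beta c_{\beta,\varphi}s^{\beta-1}ds$;
\item the nonlinearity $H(w)\approx Kw^{1+\beta}$ by \eqref{e:H_bound}, with $w=T^{-\beta}v$, contributes $T^{-\beta(1+\beta)}\cdot T^{\beta}$.
\end{itemize}
Both terms come out at the same order exactly because $\alpha=\beta$. This produces the limit equation \eqref{e:g_infty}.

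The key steps, in order, are as follows.
\begin{enumerate}
\item Derive a priori bounds: $0\le v_T\le C\,T^{\beta}\int f_T*(\delta_0+R)\le C$ uniformly, with support essentially in $[0,t_m]$; mass from $(t_m,\infty)$ is zero since $f$ vanishes there and $R$ looks forward only.
\item Replace $w_T*\check\varphi$ by $w_T$, using equicontinuity-type estimates and \eqref{e:H_bound_2}. The $\varphi$-shift is on scale $1$, negligible against scale $T$, and \eqref{e:H_bound_2} controls the difference.
\item Show that $R(T\cdot)T^{1-\beta}$ converges vaguely to $\beta c_{\beta,\varphi}s^{\beta-1}ds$, via \eqref{e:IR}.
\item Use Arzel\`a--Ascoli (or monotonicity in $f$) to extract limits of $v_T$ that solve \eqref{e:g_infty}.
\item Prove uniqueness of nonnegative bounded compactly supported solutions, by a Gronwall-type argument backward from $t_m$. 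If $v,\tilde v$ both solve, then
\begin{equation*}
|v-\tilde v|(t)\le C\int_0^{t_m-t}|v-\tilde v|(t+s)s^{\beta-1}ds,
\end{equation*}
and iterating this Volterra inequality with the integrable singular kernel forces $v=\tilde v$.
\end{enumerate}
Dominated convergence then gives convergence of $\int v_T$, hence of the finite-dimensional distributions.

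For convergence in $M_1$, $Y_T$ is nondecreasing and the limit, being of the form above, is stochastically continuous at fixed times. For monotone processes, finite-dimensional convergence on a dense set plus continuity in probability of the limit implies $M_1$ convergence; this is the same criterion as used for Theorem \ref{thm:alpha_ge_beta}. We will check that $Y_\infty(t)-Y_\infty(s)\to0$ in probability as $s\to t$ by applying the Laplace formula \eqref{e:Lapl_lim_critical} with $f=\theta\ind_{(s,t]}$ and bounding $\int v_f\le C\int f*(s^{\beta-1})\to0$.

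The main obstacle is step 2 combined with the nonlinear error. We must control
\begin{equation*}
T^{\beta}\int R(ds)\Big[H(w_T*\check\varphi)-Kw_T^{1+\beta}\Big]
\end{equation*}
uniformly, where $W(x)\to1$ only as $x\to0$. Since $w_T\le CT^{-\beta}\to0$ this is fine. However, near $s=0$ the resolvent is not well approximated by $s^{\beta-1}$, and the $\delta_0$ part must be shown negligible; this needs only the integrability of $R$ on compacts. I would first prove the bounds for the linear part and then handle the nonlinear remainder by a contraction estimate in sup norm on $[0,t_m]$.
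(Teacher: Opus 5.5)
Your overall architecture (Laplace functional, resolvent linearization, rescaled Volterra equation, kernel limit $\beta c_{\beta,\varphi}s^{\beta-1}$, Gronwall-type uniqueness, $M_1$ via monotonicity) is the paper's. However, the rescaling at its centre is wrong, so the limit equation does not come out of your computation. Under $\alpha=\beta$ you actually have $w_T(t)\le E\langle N^t,f_T\rangle\le T^{-1-\beta}\sup f\,(1+I_R(Ta))=O(T^{-1})$ by \eqref{e:IR}, where $a=\max_k t_k$. Hence your $v_T(t)=T^{\beta}w_T(tT)$ is $O(T^{\beta-1})\to 0$. Your own exponent count also does not balance: $T^{-\beta}\cdot T^{\beta}$ for $f$ against $T^{-\beta(1+\beta)}\cdot T^{\beta}$ for the nonlinearity. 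The scaling that gives $\int v_T=\int w_T$ is $v_T(t)=Tw_T(tT)$. With it one gets, uniformly in $t$,
\[
v_T(t)=\int_0^\infty\bigl(f(t+s)-Kv_T^{1+\beta}(t+s)\bigr)T^{1-\beta}R(Ts)\,ds+o(1),
\]
which is \eqref{e:Z_T}. Note also that you have the roles mixed up. The balance of $f$ against $Kv^{1+\beta}$ is automatic from $F_T=T^{1+\beta}$ and the $(1+\beta)$-homogeneity of $H$. What $\alpha=\beta$ buys is that the kernel $T^{1-\beta}R(T\cdot)$ has a nondegenerate limit. For $\alpha>\beta$ it grows like $T^{\alpha-\beta}$ and forces $Kv^{1+\beta}=f$, which is Theorem \ref{thm:alpha_ge_beta}.

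Your other steps also need changes. (i) The step you call the main obstacle needs no equicontinuity, which you do not have anyway, since each $v_T$ jumps at the $t_k$. Because $f$ is nonincreasing, so is $w_T$, and the equation itself gives $0\le w_T-w_T*\check\varphi\le 1-e^{-f_T}\le f_T$. By \eqref{e:H_bound_2} this yields $0\le H(w_T)-H(w_T*\check\varphi)\le Cw_T^{\beta}f_T=O(T^{-1-2\beta})$. After integrating against $R$ over $[0,Ta]$ and multiplying by $T$, this is $O(T^{-\beta})$ (cf. \eqref{e:S4_alt}). (ii) For the same reason Arzel\`a--Ascoli is not available for existence. Helly's theorem (the $v_T$ are nonincreasing and uniformly bounded) plus your uniqueness argument could replace it. You would still need an extra argument to pass to the limit in $\int v_T^{1+\beta}(t+s)T^{1-\beta}R(Ts)\,ds$ with only pointwise convergence. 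The paper avoids compactness altogether. For the Mittag-Leffler kernel \eqref{e:Mittag_Leffler_varphi} one has $R(s)=\beta c_{\beta,\varphi}s^{\beta-1}$ exactly, so a weighted-sup Gronwall estimate shows $Tg_T(T\cdot)$ is Cauchy and produces a $\beta$-H\"older solution $v_f$. For general $\varphi$ the paper then compares $Tg_T(T\cdot)$ with this continuous $v_f$; continuity is what lets the kernel convergence be applied to $v_f^{1+\beta}$. (iii) \eqref{e:Lapl_lim_critical} is only established for $f$ of the form \eqref{e:f_funct_thm} with $\theta_k\ge 0$, and the estimates behind it use that $f$ is nonincreasing. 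So inserting $\theta\ind_{(s,t]}$ would require extending the convergence argument. Instead, take $f_n=\theta_1\ind_{[0,r_n]}+\theta_2\ind_{[0,r]}$ and show $v_{f_n}\to v_{(\theta_1+\theta_2)\ind_{[0,r]}}$ uniformly by the same Volterra estimate. Stochastic continuity then follows from monotonicity of $Y_\infty$. Finally, to get existence of $Y_\infty$ you also need continuity of $\theta\mapsto\int v_f$ at $0$. This follows from $v_f\le\beta c_{\beta,\varphi}\int_0^\infty f(\cdot+s)s^{\beta-1}\,ds$.
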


To complete the picture we also state the corresponding result in case when the  distribution of the number of offspring has finite variance and  $\varphi$ vanishes more quickly at $+\infty$. 
\begin{thm}
\label{thm:weakly_critical}
 Assume that $\varphi:\R\mapsto \R_+$ vanishes on $(-\infty,0)$, satisfies \eqref{e:critical} and 
 \begin{equation}
  \label{e:phi_x_integrable}
m_\varphi:=\int_0^\infty x\varphi(x)dx<\infty.
  \end{equation}
Moreover, assume that the number of offspring of a single particle has finite variance denoted by $\sigma_G^2$ and let 
\begin{equation}
 F_T=T^2\label{e:F_T_2}
\end{equation}
Then
 the process $Y_T$ given by
 \eqref{e:YT} converges in the sense of finite dimensional distributions and in law in $D([0,\infty))$ equipped with $M_1$ topology to a nonnegative process $Y^*_\infty=(Y^*_\infty(t))_{t\ge 0}$ whose finite dimensional distributions are
 such that for any $m\in \N$, $\theta_1, \ldots, \theta_k\ge 0$ and $t_1,\ldots, t_m\ge 0$ we have 
 \begin{equation}
  E \exp\{-\sum_{k=1}^m\theta_kY^*_{\infty}(t_k)\}=\exp\{-\mu\int_0^\infty v_f^*(t)dt\},
  \label{e:Lapl_lim_critical_2}
 \end{equation}
where $v_f^*$ is a unique nonnegative bounded function with compact support satisfying the equation
\begin{equation}
v_f^*(t)=\frac 1{m_\varphi}\int_0^\infty \left(f(t+s)-\frac{\sigma_G^2}2 (v_f^*(t+s))^2\right) ds,
 \label{e:g_infty_2}
 \end{equation}
where $f$ is given by \eqref{e:f_funct_thm}.
\end{thm}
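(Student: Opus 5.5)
We prove Theorem \ref{thm:weakly_critical} along the same lines as Theorem \ref{thm:critical}, through the Laplace functional of the branching particle system. Fix $f$ as in \eqref{e:f_funct_thm} and put $f_T(t)=T^{-2}f(t/T)$. By the Poisson structure of the immigrants and the branching property,
\[
E\exp\Big\{-\sum_k\theta_kY_T(t_k)\Big\}=E\exp\{-\langle N,f_T\rangle\}=\exp\Big\{-\mu\int_0^\infty w_T(t)\,dt\Big\},
\]
where $w_T(t)=1-E\exp\{-\langle N^{t},f_T\rangle\}$ and $N^t$ is the family generated by a single particle placed at $t$. Conditioning on the first generation gives the renewal-type equation used in \cite{AT}:
\[
w_T=1-e^{-f_T}(1-w_T*\check\varphi)\ \text{composed with }G,
\]
which after linearisation reads $w_T=\big(f_T-H(w_T)\big)*\check R$ plus error terms. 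Here $\check g(s)=g(-s)$, $H$ is as in \eqref{e:Hdef}, and $R$ is the resolvent \eqref{e:R}. For finite variance we have $H(x)=\frac{\sigma_G^2}{2}x^2(1+o(1))$ as $x\to0$, and also $0\le H(y)-H(x)\le C(y-x)y$.

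Rescale by setting $v_T(t)=T\,w_T(Tt)$. Then $\mu\int_0^\infty w_T=\mu\int_0^\infty v_T$, and the equation becomes
\[
v_T(t)=\int_0^\infty\Big(f(t+s)-T^2H\big(v_T(t+s)/T\big)\Big)\,\tfrac1T R(Ts)\,T\,ds+\text{errors}.
\]
The key input is the renewal theorem under \eqref{e:phi_x_integrable}: the measure $R(Ts)\,ds$ converges to $\frac1{m_\varphi}\ell$ vaguely on $(0,\infty)$, and more precisely $I_R(T)/T\to 1/m_\varphi$. This holds by the elementary renewal theorem, since $R$ is the renewal density of a renewal process with mean interarrival time $m_\varphi$.

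The formal limit is therefore \eqref{e:g_infty_2}, because $T^2H(v/T)\to\frac{\sigma_G^2}{2}v^2$. To make this rigorous:
\begin{itemize}
\item[(i)] Show that $0\le v_T(t)\le \|f\|_\infty\, I_R(T\,t_{\max})/T\le C$ and that $v_T(t)=0$ for $t>t_{\max}=\max_k t_k$, since $f_T$ vanishes there and particles only move right. This gives uniform boundedness and common compact support.
\item[(ii)] Obtain equicontinuity, or at least enough compactness. Write $v_T=g_T*\check\mu_T$ with $\mu_T$ the rescaled renewal measure and $g_T$ uniformly bounded. Then use that $\mu_T([a,b])\to (b-a)/m_\varphi$ uniformly on compacts, which follows from Blackwell's theorem in integrated form, to pass to the limit along subsequences. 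Monotonicity of $v_T$ in $t$, inherited from $f$ being a nonincreasing step function, gives Helly-type compactness. Any limit point satisfies \eqref{e:g_infty_2}.
\item[(iii)] Prove uniqueness of bounded compactly supported nonnegative solutions of \eqref{e:g_infty_2}. Differentiating gives the ODE $v'=-\frac1{m_\varphi}(f-\frac{\sigma_G^2}{2}v^2)$ on intervals between the $t_k$, with $v(t_{\max})=0$, which has a unique solution by Lipschitz backward integration. Alternatively, a Gronwall argument on the difference of two solutions integrated from the right works.
\end{itemize}
Then dominated convergence yields convergence of the Laplace transforms, and hence finite-dimensional convergence. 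The limit is a consistent family of Laplace functionals of a nondecreasing process, namely the integrated CSBP with immigration, as in \cite{HorstXu}.

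For the $M_1$ tightness, $Y_T$ is nondecreasing. For monotone processes, convergence of finite-dimensional distributions on a dense set, together with stochastic continuity of the limit at each fixed time and at $0$, implies $M_1$ convergence; this is the standard criterion for monotone functions, e.g. Whitt. Stochastic continuity follows from the explicit Laplace transform, since $v_f$ depends continuously on the $t_k$. The same tool presumably serves Theorems \ref{thm:alpha_ge_beta} and \ref{thm:critical}.

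The main obstacle is controlling the error terms from replacing $1-G(1-x)$ by its expansion and from the factor $e^{-f_T}$, uniformly in $T$. One also has to justify the limit $T\int H(v_T/T)\,R(T\cdot)\to \frac{\sigma_G^2}{2m_\varphi}\int v^2$ when $R$ is only locally integrable, not necessarily bounded. I would handle this with the bounds $|H(x)-\frac{\sigma_G^2}{2}x^2|\le x^2\epsilon(x)$, with $\epsilon(x)\to0$, and $0\le v_T\le C$, so that $v_T/T\to0$ uniformly. The renewal measure estimate $\sup_t\int_t^{t+h}R(Ts)\,T\,ds\le C(h+1/T)$ then controls the short-range part.
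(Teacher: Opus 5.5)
Your proposal is correct, but its central convergence step takes a different route from the paper. The paper only sketches this theorem, as the case $\beta=1$ of the proof of Theorem \ref{thm:critical}. It first gets $\sup_t|TQ_T(t)|\to0$ from Lemmas \ref{lem:lem1} b) and \ref{lem:lem2} b). It then proves convergence of $Tg_T(T\cdot)$ by a quantitative Gronwall-type contraction in the weighted norm $\sup_{t\le a}e^{-\sigma(a-t)}|\cdot|$, in two stages:
\begin{itemize}
\item For the exponential kernel, $R\equiv 1/m_\varphi$, so the rescaled equation already has the limiting kernel. There $Tg_T(T\cdot)$ is shown to be uniformly Cauchy, which also constructs $v^*_f$.
\item For general $\varphi$, it compares directly with that $v^*_f$, using uniform convergence of $I_R(Tt)/T$ to $t/m_\varphi$ and continuity of $v^*_f$.
\end{itemize}
You replace this with a soft argument. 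Monotonicity of $v_T$ and Helly's selection theorem give subsequential pointwise limits. These are identified through convergence of the rescaled renewal measures, and uniqueness of the limit equation (backward ODE/Gronwall) pins down the limit. Your route needs no auxiliary kernel, because existence of a solution comes from compactness. The paper's route gives uniform convergence directly, without using monotonicity. It is also built to carry over verbatim to the fractional kernel $s^{\beta-1}$ of Theorem \ref{thm:critical}, where no ODE reduction exists; your scheme would adapt there too, with a weighted Gronwall uniqueness argument.

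A few steps should be made explicit.
\begin{itemize}
\item \emph{Passing to the limit.} Helly gives only pointwise convergence, while the measures $R(T_ns)\,ds$ change with $n$, so the interchange needs an argument. It works because $f(t+\cdot)-T_n^2H(v_{T_n}(t+\cdot)/T_n)$ is a difference of bounded nonincreasing functions. Integrate by parts against $s\mapsto I_R(T_ns)/T_n$, which converges to $s/m_\varphi$ uniformly on $[0,a]$, and then apply dominated convergence.
\item \emph{Local renewal bound.} As written it is false because of an extra factor $T$. It should read $\sup_t\int_t^{t+h}R(Ts)\,ds\le C(h+1/T)$.
\item \emph{A missing linearization error.} Besides the expansion of $H$ and the factor $e^{-f_T}$, there is the replacement of $H(\int_0^\infty w_T(t+s)\varphi(s)\,ds)$ by $H(w_T(t))$ (the term $S_{T,4}$ in the paper). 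It is $O(T^{-3})$ by $H(y)-H(x)\le C(y-x)y$, $w_T\le C/T$ and $0\le w_T(t)-\int_0^\infty w_T(t+s)\varphi(s)\,ds\le f_T(t)$. Hence it is $O(1/T)$ after convolving with $R$ and multiplying by $T$; this is what Lemma \ref{lem:lem2} b) records.
\item \emph{Existence of the limit law.} This need not be imported from Horst--Xu. The equation gives $0\le v^*_f(t)\le m_\varphi^{-1}\int_t^\infty f(u)\,du$, so $\int_0^\infty v^*_f\to0$ as $\theta\to0$. That yields tightness of the finite-dimensional distributions.
\end{itemize}
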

\begin{rem} \label{rem:weakly_critical}
It is clear that the form of the limit process in Theorem \ref{thm:weakly_critical} is formally the same as in Theorem \ref{thm:critical} if we set $\beta=1$.

 Theorem \ref{thm:weakly_critical} corresponds to the weakly critical case studied in Theorem 2.10 of \cite{HorstXu} for the usual Hawkes processes (not marked). In fact,  Theorem 2.10 of \cite{HorstXu} gives more information on the joint convergence of intensity process and the process $Y_T$  and other related processes in the simpler Hawkes processes case. Theorem 2.10 of \cite{HorstXu} shows that the limit process $Y_\infty^*$ has the form
 \begin{equation}
  Y_\infty^*(t)=\int_0^t\Lambda^*(s)ds \label{e:r1}
 \end{equation}
Where $\Lambda^*(s)$ is a continuous state branching process with immigration (see e.g. \cite{Zenghu_Li}) satisfying the following stochastic equation
\begin{equation}
\Lambda^*(t)=\frac{\mu}{m_\varphi}t+\int_0^t\frac {\sigma_G}{m_\varphi} \sqrt{\Lambda^*(s)}dB(s), 
\label{e:r2}
\end{equation}
where $\sigma_G^2=1$ (for Hawkes process) and
 $B$ is a Brownian motion.

Using Theorem 2.11 of \cite{HorstXu} it is not difficult that representation \eqref{e:r1} and \eqref{e:r2} holds also in case of Theorem \ref{thm:weakly_critical} (see end of Section \ref{sec:proofs}).

Note, that the limit process $ Y^*_\infty$ is continuous hence $M_1$ convergence implies uniform convergence and $J_1$ convergence.  (cf. e.g. p.14 (Bonus property) in \cite{Kern} ).

Unfortunately we were not able to find a corresponding representation of  the limit process $Y_\infty$ in Theorem \ref{thm:critical}. It seems that the limit process should be related in some way to continuous state branching processes with branching function $x\mapsto C x^{1+\beta}$, but it appears that this dependence is more involved.
\end{rem}

\section{Proofs}
\label{sec:proofs}
\subsection{Properties of the resolvent}
\label{sec:Resolvent}

We start with the discussion of the resolvent $R$ and its integral $I_R$ defined in \eqref{e:R} and \eqref{e:IR_def}, respectively. We already know that under Assumption \ref{ass:phi_alpha} we have \eqref{e:IR}. In the next lemma we discuss the behavior of $I_R$ in other cases that we will be interested in.

\begin{lem}
 a) For any $\varphi$ satisfying \eqref{e:critical}
 \begin{equation}
  \lim_{T\to \infty }\frac 1T I_R(T)=\frac 1{m_\varphi},
  \label{e:IR_a_1}
 \end{equation}
where $m_\varphi$ is given by \eqref{e:phi_x_integrable}, with the convention $\frac 1{\infty}=0$. In particular we always have
 \begin{equation}
\limsup_{T\to \infty} \frac 1T I_R(T)<\infty\label{e:IR_a}.
\end{equation}
b)  Under Assumption \ref{ass:beta_integrable} we have
\begin{equation}
\liminf_{T\to \infty} \frac 1{T^{(\beta+\delta)\wedge 1}} I_R(T)>0.\label{e:IR_b}
  \end{equation}
\end{lem}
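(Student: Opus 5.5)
The whole argument rests on renewal theory. Since $\varphi$ is a probability density, $\varphi^{(*k)}$ is the density of $S_k=\xi_1+\dots+\xi_k$. Hence
\[
I_R(T)=\sum_{k=1}^\infty P(S_k\le T)=E\,\mathcal N(T),
\]
where $\mathcal N(T)=\#\{k\ge1: S_k\le T\}$ is the renewal counting function of a renewal process with inter-arrival density $\varphi$.

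For part a), the elementary renewal theorem gives $E\mathcal N(T)/T\to 1/E\xi=1/m_\varphi$. This includes the case $m_\varphi=\infty$, where the limit is $0$. The bound \eqref{e:IR_a} follows immediately. If one prefers a self-contained argument for the upper bound, Wald's identity applied to the stopping time $\mathcal N(T)+1$ gives $E S_{\mathcal N(T)+1}=m_\varphi(E\mathcal N(T)+1)$. When $m_\varphi=\infty$, one truncates $\xi$ at a level $a$, which only increases the renewal count, and then lets $a\to\infty$. In all cases I will simply cite the elementary renewal theorem.

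For part b), set $\gamma=(\beta+\delta)\wedge1$. If $\gamma=1$, then $\beta+\delta\ge1$, so \eqref{e:varphi_beta_integrable} implies $m_\varphi<\infty$, and part a) gives $I_R(T)/T\to 1/m_\varphi>0$. So the case to handle is $\gamma=\beta+\delta<1$, where $m_\varphi$ may be infinite.

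In that case I use a subadditivity (Markov-type) bound on the partial sums. Since $0<\gamma<1$, the map $x\mapsto x^\gamma$ is subadditive, so
\[
S_k^\gamma\le \sum_{j\le k}\xi_j^\gamma \quad\text{and}\quad E S_k^\gamma\le k\,m_\gamma,\qquad m_\gamma:=E\xi^\gamma<\infty .
\]
Markov's inequality then gives $P(S_k>T)\le k m_\gamma T^{-\gamma}$. Take $k\le n:=\lfloor T^\gamma/(2m_\gamma)\rfloor$. For such $k$ we get $P(S_k\le T)\ge 1/2$, and therefore
\[
I_R(T)\ge \sum_{k=1}^n P(S_k\le T)\ge \frac n2\ge c\,T^\gamma
\]
for large $T$. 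This proves \eqref{e:IR_b}.

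The only delicate points are checking that the Markov/subadditivity step is valid for exponents below $1$, which it is, and reducing the case $\beta+\delta\ge1$ to part a). I expect no real obstacle in either.
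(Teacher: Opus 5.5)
Your proof is correct. Part a) is exactly the paper's argument: the identity $I_R(T)=\sum_{k\ge1}P(S_k\le T)$ plus the elementary renewal theorem, including the case $m_\varphi=\infty$.

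In part b) you share the paper's key observation, the subadditivity $S_k^\gamma\le\sum_{j\le k}\xi_j^\gamma$, but you finish differently.

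\textbf{The paper's route.} It assumes without loss of generality that $\beta+\delta\le1$, which covers $\gamma=1$ without a case split. It then uses subadditivity to get $P(S_k\le T)\ge P(\xi_1^\gamma+\dots+\xi_k^\gamma\le T^\gamma)$. Finally it applies the elementary renewal theorem a second time, now to the i.i.d.\ sequence $\zeta_j=\xi_j^\gamma$ at time $T^\gamma$. This yields the sharp bound $\liminf_{T\to\infty} T^{-\gamma}I_R(T)\ge 1/E\xi^\gamma$.

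\textbf{Your route.} You take expectations, apply Markov's inequality, and keep only the first $\lfloor T^\gamma/(2m_\gamma)\rfloor$ terms, each of which is at least $1/2$.
\begin{itemize}
\item This is fully elementary and needs no renewal theorem in part b).
\item The constant is worse, roughly $1/(4m_\gamma)$. That is irrelevant, since only positivity is used later.
\item Your Markov argument also works verbatim for $\gamma=1$, because then $m_\varphi<\infty$. So the reduction of that case to part a) is unnecessary.
\end{itemize}

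\textbf{A small inaccuracy in your optional Wald aside.} Wald's identity for the stopping time $\mathcal N(T)+1$ gives $m_\varphi\bigl(E\mathcal N(T)+1\bigr)=ES_{\mathcal N(T)+1}>T$. That is the \emph{lower} bound on $E\mathcal N(T)$, not the upper bound. For the upper bound you must truncate $\xi$ at a level $a$ in every case, not only when $m_\varphi=\infty$, in order to control the overshoot. Truncation gives $E\mathcal N(T)\le (T+a)/E(\xi\wedge a)$. Since your main argument simply cites the elementary renewal theorem, this does not affect the proof.
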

\proof 
It is a simple and standard result from renewal theory that if $\zeta_1,\zeta_2, \ldots$ are i.i.d. nonnegative random variables with $E\zeta_1\in(0,\infty]$, then
\begin{equation}
 \lim_{T\to \infty}\frac 1T\sum_{k=1}^\infty P(\zeta_1+\ldots+\zeta_k\le T)=\frac 1{E\zeta_1} 
 \label{e:renewal_mean}
\end{equation}
with the convention $\frac 1{\infty}=0$.
(see e.g. \cite{Durrett}, Theorem 4.4.2).

Recall that $\xi_1,\xi_2, \ldots$ denote i.i.d. random variables with density $\varphi$. Then by the definition of $I_R$ (see \eqref{e:IR_def} and \eqref{e:R}) and  \eqref{e:renewal_mean} applied for $\zeta_j=\xi_j$ we immediately obtain \eqref{e:IR_a}.

To prove \eqref{e:IR_b} observe that in Assumption \ref{ass:beta_integrable} without loss of generality we may assume that $\delta$ is such that $\beta+\delta\le 1$. Then, using $(x+y)^{\beta+\delta}\le x^{\beta+\delta}+y^{\beta+\delta}$ for $x,y\ge 0$ we have 
\begin{align*}
 \frac 1{T^{\beta+\delta}}I_R(T)=&\frac 1{T^{\beta+\delta}} \sum_{k=1}^\infty P((\xi_1+\ldots+\xi_k)^{\beta+\delta}\le T^{\beta+\delta})\\
 \ge &\frac 1{T^{\beta+\delta}} \sum_{k=1}^\infty P(\xi_1^{\beta+\delta}+\ldots+\xi_k^{\beta+\delta}\le T^{\beta+\delta}).
 \end{align*}
 Therefore by \eqref{e:renewal_mean} with $\zeta_j=\xi_{j}^{\beta+\delta}$ we obtain
 \begin{equation*}
\liminf_{T\to\infty} \frac 1{T^{\beta+\delta}}I_R(T)\ge\frac 1{E{\xi_1^{\beta+\delta}}}>0
 \end{equation*}
and \eqref{e:IR_b} follows.
 \qed

\subsection{Laplace transform of finite dimensional distributions}

We will use the formulas developed in Proposition 3.1 of \cite{AT}. Note that it was proved under only very basic assumptions on the particle system, it is also clear that the form of $G$ was not important. One only needs to assume that $G$ is a generating function of a probability distribution on $\Z_+$ with mean $1$.

Let us recall the notation used in \cite{AT}:

$N$ is the empirical measure given by \eqref{e:N}. We denote by $N^t$ the empirical measure of an analogous particle system without immigration and starting from a single ``root'' particle located at $t\ge 0$.

For a Borel function $f:\R\mapsto \R$ we write
\begin{equation*}
 \<N,f\>=\int_0^\infty f(t) N(dt)
\end{equation*}
whenever the integral makes sense.

For $T\ge 1$ and a Borel function $f:\R\mapsto \R$ we denote
\begin{equation}
\label{e:fT}
 f_T(t)=\frac 1{F_T}f(\frac t T), \qquad t\ge 0,
\end{equation}
where $F_T$ is given by \eqref{e:FT}.

We will work with $f$ of the form 
\begin{equation}
f(t)=    \sum_{k=1}^m \theta_k\ind_{[0, t_k]}.
\label{e:f_sum}
\end{equation}
We denote 
\begin{equation}a=\max_{k\le m}t_k.
 \label{e:a}
\end{equation}

Then
 \begin{equation*}
  \<N,f_T\>=\frac 1{F_T}\sum_{k=1}^m \theta_k N(Tt_k)=\sum_{k=1}^m \theta_k Y_T(t_k).
 \end{equation*}
With this notation it is clear that to prove convergence of finite dimensional distributions of $Y_T$ we need to show convergence in law of $\<N,f_T\>$ for any $f$ of the form \eqref{e:fT} with $\theta_1,\ldots, \theta_m\in \R$, $t_1,\ldots, t_k\in \Rp$. Since $Y_T$ is nonnegative we may work with Laplace transforms and by the standard argument, it suffices to show convergence of the Laplace transform of $\<N,f_T\>$ for any $f$ of the form \eqref{e:fT} with all $\theta_k$ nonnegative
to the corresponding Laplace transform of the limit process. 
Additionally in Theorem \ref{thm:critical} we have to prove existence of the limit process with this Laplace transform, i.e. for example tightness of the finite dimensional distributions of $Y_T$.

Proposition 3.1 of \cite{AT} implies the following: 

 \begin{prop}
 \label{prop:Laplace}
a) For any $m\in\N$ and $\theta_1,\ldots, \theta_m, t_1,\ldots, t_m\in (0,\infty)$ we have
\begin{equation}
 E\exp\left\{-\sum_{k=1}^m\theta_k Y_{T}(t_k)\right\}=\exp\left\{-\mu\int_0^\infty g_T(t)dt\right\},
 \label{e:Laplace_2}
\end{equation}
where
\begin{equation}
  g_{T}(t):=1-Ee^{-\<N^t, f_T\>},
\label{e:def_g}
 \end{equation}
 with $f_T$ given by \eqref{e:f_sum} and \eqref{e:fT} and recalling that $N^t$ denotes the empirical measure of a branching particle system starting from a single particle at point $t$ (with no immigration). Moreover, the function $g_T$ satisfies
\begin{align}
  g_{ T}(t)=&1-e^{-f_T(t)}
G\left(1-\int_0^\infty g_{T}(t+s)\varphi(s)ds\right)
 \label{e:g_eq}\\
 =& 1-e^{-f_T(t)}+e^{-f_T(t)}\int_0^\infty g_T(t+s)\varphi (s)ds -e^{-f_T(t)}H \left(\int_0^\infty g_T(t+s)\varphi(s) ds\right)
 \label{e:g_eqH}.
\end{align}
b) Let the function $h_T$ be given by
\begin{equation}h_T(t):=E\<N^t,f_T\>
 \label{e:h_T_def}
\end{equation}
Then $h_T$ 
satisfies the equation
\begin{equation}
\label{e:renewal}
 h_{T} (t) =  f_T(t)+\int_0^\infty  h_{T}(t+s)\varphi (s)ds.
\end{equation}
Moreover, $h_T$ may be expressed explicitly as
\begin{equation}
 h_T(t)=f_T(t)+\int_0^\infty f_T(t+s) R(s)ds,
 \label{e:hR}
\end{equation}
where $R$ is the resolvent defined in \eqref{e:R}.\\
c) The functions $g_T$ and $h_T$ satisfy
\begin{equation}
 0\le g_T(t)\le h_T(t)\qquad t\ge 0.
 \label{e:ineq_g_f}
\end{equation}
\end{prop}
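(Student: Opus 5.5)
The plan is to derive all three parts from the cluster (Galton--Watson with displacement) structure of the system, conditioning on the immigrants and then on the first generation of a single family. For part a), I write $N=\sum_i N_i^{\tau_i}$, where $(\tau_i)$ are the points of the Poisson random measure with intensity $\mu\ell$ and, given these points, the $N_i^{\tau_i}$ are independent copies of $N^{\tau_i}$. Since $f_T\ge 0$, conditioning on the immigrant configuration gives
\[
E e^{-\<N,f_T\>}=E\prod_i E e^{-\<N^{t},f_T\>}\big|_{t=\tau_i}=E\exp\Big\{\sum_i \log\big(1-g_T(\tau_i)\big)\Big\}.
\]
The Laplace functional of the Poisson random measure then yields \eqref{e:Laplace_2}:
\[
E\exp\Big\{\sum_i \log\big(1-g_T(\tau_i)\big)\Big\}=\exp\Big\{-\mu\int_0^\infty g_T(t)\,dt\Big\}.
\]
Both sides are in $[0,1]$, so no integrability issue arises; if $\int g_T=\infty$ the identity reads $0=0$.

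For \eqref{e:g_eq} I decompose the family rooted at $t$ at the first branching:
\[
N^t=\delta_t+\sum_{j=1}^{\rho}N_j^{t+\xi_j},
\]
where, given $\rho$ and $(\xi_j)$, the $N_j^{t+\xi_j}$ are independent copies. Taking expectations conditionally on $\rho$ gives
\[
Ee^{-\<N^t,f_T\>}=e^{-f_T(t)}\,E\Big[\Big(\int_0^\infty Ee^{-\<N^{t+s},f_T\>}\varphi(s)\,ds\Big)^{\rho}\Big]=e^{-f_T(t)}\,G\Big(1-\int_0^\infty g_T(t+s)\varphi(s)\,ds\Big).
\]
The second line \eqref{e:g_eqH} is just the definition \eqref{e:Hdef}, $G(1-u)=1-u+H(u)$, substituted in and expanded.

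For part b), the same first-step decomposition with expectations, together with $E\rho=1$ and independence of $\rho$ from the displacements (Wald's identity), gives the renewal equation \eqref{e:renewal}. One first checks that $h_T$ is finite and bounded. Since $f_T\le F_T^{-1}\sum_k\theta_k\,\ind_{[0,aT]}$, $h_T(t)$ is bounded by a constant times the expected number of particles of the family in $[0,aT]$. Counting descendants generation by generation, with mean one child per particle, that expected number is $1+\sum_{n\ge1}P(\xi_1+\dots+\xi_n\le aT)=1+I_R(aT)$, which is finite by \eqref{e:IR_a}.

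Iterating \eqref{e:renewal} $n$ times gives
\[
h_T(t)=f_T(t)+\sum_{k=1}^{n-1}\int_0^\infty f_T(t+s)\varphi^{(*k)}(s)\,ds+\int_0^\infty h_T(t+s)\varphi^{(*n)}(s)\,ds.
\]
The remainder is at most $\sup h_T\cdot P(\xi_1+\dots+\xi_n\le aT)$, because $h_T$ vanishes beyond $aT$; this tends to $0$ as the terms of the convergent series $I_R(aT)$ do. Monotone convergence of the sum then yields \eqref{e:hR}.

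For part c), $g_T\ge 0$ is immediate since $\<N^t,f_T\>\ge 0$. The bound $g_T\le h_T$ follows from $1-e^{-x}\le x$ applied inside the expectation.

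The only step requiring care is the finiteness and justification of the iteration in b), i.e. that the mean number of particles in a bounded window is finite, so that the remainder of the iterated renewal equation vanishes. This reduces directly to the renewal bound \eqref{e:IR_a}; everything else is the standard branching first-step analysis, which is indifferent to the specific form of $G$ beyond mean $1$.
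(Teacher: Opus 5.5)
Your proof is correct. It uses the Poisson Laplace functional over the immigrant clusters, the first-generation decomposition $N^t=\delta_t+\sum_{j\le\rho}N_j^{t+\xi_j}$ (with $\int\varphi=1$ and $\rho$ independent of the displacements), and iteration of the renewal equation, whose remainder vanishes because $P(\xi_1+\dots+\xi_n\le aT)\to0$ and $I_R(aT)<\infty$; this is the standard branching derivation. The paper gives no separate proof, importing the statement from Proposition 3.1 of \cite{AT} with the remark that only $E\rho=1$ matters, and your write-up makes that remark explicit. The only nit is that for a root at $t>0$ the mean count in $[0,aT]$ is bounded by $1+I_R(aT)$, not equal to it.
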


Since  $f$ is given by \eqref{e:f_sum} with nonnegative $\theta_k$, it is nonincreasing and has compact support $[0,a]$, therefore  
an easy consequence of Proposition \ref{prop:Laplace} is the following corollary:
\begin{cor}
 Under the assumptions and notation of Proposition \ref{prop:Laplace} the functions $g_T$ and $h_T$ are nonincreasing, and have their supports in $[0,Ta]$, where $a$ is given by \eqref{e:a}, $g_T$ is bounded by $1$. Moreover, $g_T$ satisfies the equation
 \begin{equation}
  g_T(t)=V_T(t)+\int_0^\infty g_T(t+s)\varphi(s)ds
\label{e:gT_VT}
  \end{equation}
where $V_T$ is bounded and nonnegative, has its support in $[0,Ta]$ and it is given by
\begin{equation}
V_T(t)=1-e^{-f_T(t)}-(1-e^{-f_T(t)})\int_0^\infty g_T(t+s)\varphi(s)ds-e^{-f_T(t)}H\left(\int_0^\infty g_T(t+s)\varphi(s)ds\right).
 \label{e:def_VT}
\end{equation}
Moreover,  equation \eqref{e:gT_VT} may be written in an equivalent form as
\begin{equation}
 g_T(t)=V_T(t)+\int_0^\infty V_T(t+s)R(s)ds,
 \label{e:gT_VT_2}
\end{equation}
where $R$ is given by \eqref{e:R}.
\end{cor}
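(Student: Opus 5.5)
We need to prove the Corollary: that $g_T$ and $h_T$ are nonincreasing and supported in $[0,Ta]$, that $g_T\le 1$, that \eqref{e:gT_VT} holds with $V_T$ bounded, nonnegative and supported in $[0,Ta]$, and that \eqref{e:gT_VT_2} holds.

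\emph{Monotonicity and support.} I would argue directly from the particle system. Since $\varphi$ vanishes on $(-\infty,0)$, all descendants of a root particle at $t$ lie in $[t,\infty)$. Moreover, $N^{t}$ has the same law as $N^0$ shifted by $t$, so
\begin{equation*}
\<N^t,f_T\>\stackrel{d}{=}\int f_T(t+u)\,N^0(du).
\end{equation*}
Because $f$ is nonnegative and nonincreasing, $f_T(t+u)$ is nonincreasing in $t$ for every $u$. Hence $\<N^t,f_T\>$ is stochastically nonincreasing in $t$, and both $h_T(t)=E\<N^t,f_T\>$ and $g_T(t)=1-Ee^{-\<N^t,f_T\>}$ are nonincreasing.

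If $t>Ta$, then $f_T$ vanishes on $[t,\infty)$, so $\<N^t,f_T\>=0$ and $g_T(t)=h_T(t)=0$. (At $t=Ta$ itself the values may be positive, which is consistent with support $[0,Ta]$.) Finally $0\le g_T\le 1$ is immediate from its definition as one minus a Laplace transform of a nonnegative variable.

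\emph{Equation \eqref{e:gT_VT} and the properties of $V_T$.} Write $J(t)=\int_0^\infty g_T(t+s)\varphi(s)ds$. Since $0\le g_T\le1$ and $\norm{\varphi}_1=1$, we have $J(t)\in[0,1]$. In \eqref{e:g_eqH} write $e^{-f_T}J=J-(1-e^{-f_T})J$; this gives $g_T=V_T+J$ with $V_T$ exactly as in \eqref{e:def_VT}.

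Support: for $t>Ta$ we have $f_T(t)=0$ and $J(t)=0$, and $H(0)=0$, so $V_T(t)=0$.

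Boundedness: $V_T$ is bounded because every term is bounded, using $J\le 1$ and $H$ bounded on $[0,1]$.

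Nonnegativity is the only point needing care. Since $H(J)=G(1-J)-1+J$,
\begin{equation*}
V_T=(1-e^{-f_T})(1-J)-e^{-f_T}\bigl(G(1-J)-(1-J)\bigr) =(1-J)-e^{-f_T}G(1-J).
\end{equation*}
Here I would use the fact that a generating function satisfies $G(x)\le 1$, while mean $1$ and convexity give $G(x)\ge x$. These alone do not settle the sign, so I would go back to the definitions. Write $1-J(t)=E\,e^{-\<N^{t+\xi},f_T\>}$ with $\xi$ having density $\varphi$. The branching decomposition gives
\begin{equation*}
e^{-f_T(t)}G(1-J(t))=Ee^{-\<N^t,f_T\>},
\end{equation*}
which is the Laplace transform of the whole tree. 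On the other hand, $1-J(t)$ is the Laplace transform of the subtree of a single child. Since $G(1-J)$ is the expectation of a product of $\rho$ factors, each in $[0,1]$, a direct comparison is not immediate.

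The cleaner route is to use \eqref{e:g_eqH} together with the ordering $g_T(t)\ge J(t)$. This ordering follows from monotonicity: $g_T(t)\ge g_T(t+s)$ for $s\ge 0$, and integrating against $\varphi$ gives $g_T(t)\ge J(t)$. Since $V_T=g_T-J$, this yields $V_T\ge0$. I expect this nonnegativity to be the main (though mild) obstacle, and it is resolved by the monotonicity established in the first step.

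\emph{Resolvent form \eqref{e:gT_VT_2}.} Iterate \eqref{e:gT_VT} $n$ times to get
\begin{equation*}
g_T(t)=V_T(t)+\sum_{k=1}^{n-1}\int V_T(t+s)\varphi^{*k}(s)ds+\int g_T(t+s)\varphi^{*n}(s)ds.
\end{equation*}
The remainder is at most $P(\xi_1+\dots+\xi_n\le Ta)$, because $g_T\le 1$ and $g_T$ vanishes beyond $Ta$. This probability tends to $0$ as $n\to\infty$, since the $\xi_i$ are not identically zero and so their partial sums tend to infinity almost surely. The partial sums over $k$ converge monotonically to $\int V_T(t+s)R(s)ds$. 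This limit is finite, because $\int_0^{Ta}R=I_R(Ta)<\infty$ by \eqref{e:IR_a}.
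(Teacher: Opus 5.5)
Your proposal is correct and follows essentially the same route as the paper. Monotonicity of $g_T$ and $h_T$ comes from translation invariance of the system together with monotonicity of $f_T$; your coupling through the shifted $N^0$ is just a rephrasing of the paper's comparison $\langle N^{t+s},f_T(\cdot-s)\ind_{[s,\infty)}\rangle\ge\langle N^{t+s},f_T\rangle$. Nonnegativity of $V_T=g_T-\int_0^\infty g_T(\cdot+s)\varphi(s)ds$ comes from $g_T(t)\ge g_T(t+s)$, and \eqref{e:gT_VT_2} comes from iterating \eqref{e:gT_VT} and killing the remainder via $\xi_1+\dots+\xi_n\to\infty$ a.s.

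The detour through $V_T=(1-J)-e^{-f_T}G(1-J)$ is unnecessary. Finiteness of $\int_0^\infty V_T(t+s)R(s)ds$ also does not need \eqref{e:IR_a}, since the monotone partial sums are already bounded by $g_T(t)\le 1$.
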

Note that $V_T=f_T-U_{f_T}$ where $U_{f_T}$ is given by formula (3.19) in \cite{AT}.
\proof 
It follows from the definition of $g_T$ (see \eqref{e:def_g}) that it is bounded by $1$. By \eqref{e:a} and \eqref{e:f_sum},
 the support of $f_T$ is in $[0,Ta]$ and  hence, if $t>T a$ then $N^t$ does not charge $[0,Ta]$, so $g_T(t)=0$ for $t>Ta$. Moreover,  
observe that for any $s\ge 0$ the random variable
$\<N^t,f_T\>$ has the same law as $\<N^{t+s}, f_T(\cdot -s)\ind_{[s,\infty)}\>$ and since
 $f_T$ is nonnegative and nonincreasing and $N^{t+s}$ does not charge $[0,s]$ we have
\begin{equation*}
 \<N^{t+s},f_T(\cdot-s)\ind_{[s,\infty)}\>\ge \<N^{t+s},f_T\ind_{[s,\infty)}\>=\<N^{t+s},f_T\>.
\end{equation*}
From  \eqref{e:def_g} and \eqref{e:h_T_def} it follows that both $g_T$ and $h_T$ are nonincreasing. This also implies in particular that 
\begin{equation}
 g_T(t)=\int_0^\infty g_T(t)\varphi(s)ds\ge \int_0^\infty g_T(t+s)\varphi(s)ds.
 \label{e:gT_ineq}
\end{equation}

\eqref{e:gT_VT} is a direct consequence of \eqref{e:g_eqH}. Moreover, from \eqref{e:gT_VT} and \eqref{e:gT_ineq} it follows that $V_T$ is nonnegative. The support of $V_T$ is contained in $[0,Ta]$. Boundedness of $V_T$ is also clear.

\eqref{e:gT_VT_2} follows by iteration of  \eqref{e:gT_VT} since $V_T$ is bounded and has compact support  and  since if $\xi_1,\xi_2, \ldots $ are i.i.d. with density $\varphi$, then $\xi_1+\ldots+\xi_n\to \infty$ a.s. if $n\to \infty$. 
\qed

\subsection{Properties of the function $g_T$.}

In the proof of convergence of finite dimensional distributions in Theorems \ref{thm:alpha_ge_beta}, \ref{thm:critical} and \ref{thm:weakly_critical}  will use Proposition \ref{prop:Laplace} and will have to investigate convergence of the integral of $g_T$. Some parts of this proof are the same in both cases. We will state them as lemmas in this section. In this section we  do not assume anything about $\varphi$, other than the basic assumption \eqref{e:critical}. 

\begin{lem}
 \label{lem:lem1}
 a) Suppose that Assumption \ref{ass:beta_general} is satisfied and that $f$ is of the form \eqref{e:f_sum} with all $\theta_k$ nonnegative, $f_T$ is given by \eqref{e:fT} with $F_T=T^{1+\beta}$ and $g_T$ is as in Proposition \ref{prop:Laplace}. Then 
 \begin{equation}
 H(g_T(t))\le f_T(t), \qquad \textrm{ for any}\ t\ge 0.
 \label{e:Hg_le_f}
\end{equation}
Moreover, there exists $T_0\ge 1$ (which may depend on $f$)  and $C>0$ such that for any $T\ge T_0$ we have
\begin{equation}
Tg_T(tT)\le C f^{1/(1+\beta)}(t), \qquad t\ge 0.
\label{e:Tg_T_bound}
\end{equation}
 Thus
  \begin{equation}
 \limsup_{T\to \infty}\int_0^\infty g_T(t)dt<\infty.
\label{e:limsup_g_T}
 \end{equation}
 b) If the number of offspring of a single particle has finite variance and $F_T=T^2$ then \eqref{e:Hg_le_f}-\eqref{e:limsup_g_T}
  follow with $\beta=1$ and $F_T=T^2$.
\end{lem}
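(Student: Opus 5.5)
Since $g_T$ is nonincreasing (by the Corollary), \eqref{e:gT_ineq} gives $\int_0^\infty g_T(t+s)\varphi(s)ds\le g_T(t)$. This is the key input for \eqref{e:Hg_le_f}. I will rewrite \eqref{e:g_eqH} by writing $x=\int_0^\infty g_T(t+s)\varphi(s)ds$ and $y=g_T(t)$, with $x\le y\le 1$:
\begin{equation*}
y = 1-e^{-f_T(t)}(1-x) - e^{-f_T(t)}H(x),
\end{equation*}
which rearranges to
\begin{equation*}
e^{f_T(t)}(1-y)=1-x+H(x)\ge 1-y+H(x).
\end{equation*}
Hence $H(x)\le (e^{f_T(t)}-1)(1-y)$. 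This bounds $H(x)$, not $H(y)$. To get $H(y)$ I will instead use the first form \eqref{e:g_eq}, $1-y=e^{-f_T}G(1-x)$, together with the fact that $H$ is nondecreasing. Concretely, $G(1-x)=1-x+H(x)$, so
\begin{equation*}
y-x = 1-x-e^{-f_T}(1-x+H(x)).
\end{equation*}
Since $y\ge x$, this gives $H(x)\le (e^{f_T}-1)(1-x)$, which is still for $x$.

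I need to check whether $H(y)\le f_T$ really follows, or whether the intended route differs. One option is to use $H(y)\le H(x)+C(y-x)y^\beta$ from \eqref{e:H_bound_2}; this needs a bound on $y-x$, and $y-x\le 1-e^{-f_T}\le f_T$ only up to a factor. A cleaner route is via expectations. Let $Z=\langle N^t,f_T\rangle$, so that $y=1-Ee^{-Z}$. Convexity-type arguments (generating-function comparison) suggest $H(1-Ee^{-Z})\le$ something of size $E Z$ restricted to the root. I will therefore try iterating \eqref{e:g_eqH} from the boundary: $g_T=0$ beyond $Ta$, and on each step $H(g_T(t))$ is controlled by $f_T(t)$ using monotonicity of $f_T$. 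I expect this point, getting $H$ of $y$ rather than of $x$ bounded by $f_T(t)$ exactly, to be the main obstacle. I will first try an induction on the level sets of $t$ (backward in time from $Ta$), using that $f_T$ is nonincreasing and $H$ convex-like.

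Granting \eqref{e:Hg_le_f}, I will derive \eqref{e:Tg_T_bound} as follows. By \eqref{e:Hg_le_f}, $H(g_T(tT))\le T^{-1-\beta}f(t)$. Since $W\to1$ at $0$ and $H$ is nondecreasing with $H(x)>0$ for $x>0$ (criticality excludes the degenerate case), there is $x_0>0$ with $H(x)\ge \frac K2 x^{1+\beta}$ for $x\le x_0$, and $H\ge H(x_0)>0$ on $[x_0,1]$. For $T\ge T_0$ with $T_0^{-1-\beta}\max f< H(x_0)$ this forces $g_T(tT)\le x_0$. Then $\frac K2 g_T(tT)^{1+\beta}\le T^{-1-\beta}f(t)$, i.e. $Tg_T(tT)\le Cf(t)^{1/(1+\beta)}$. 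Substituting $t=Tu$ gives $\int_0^\infty g_T=\int_0^\infty Tg_T(Tu)du\le C\int_0^a f^{1/(1+\beta)}<\infty$, which is \eqref{e:limsup_g_T}.

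For part b), finite variance gives $H(x)=\frac{\sigma_G^2}2x^2(1+o(1))$, and $H(x)>0$ for $x>0$, since $G$ is strictly convex because the distribution is not concentrated at $1$. The same argument then runs verbatim with $\beta=1$ and $F_T=T^2$.
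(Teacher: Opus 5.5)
\textbf{Gap: the key inequality is never proved.} You never prove \eqref{e:Hg_le_f}. You flag it as the main obstacle and leave only plans (backward induction from $Ta$, convexity comparisons). You then derive \eqref{e:Tg_T_bound}, \eqref{e:limsup_g_T} and part b) while ``granting'' it, so as written the lemma is not established.

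The missing idea is a different monotonicity from the one you invoke. Monotonicity of $H$ goes the wrong way: it gives $H(x)\le H(y)$, a lower bound on $H(y)$. What you need is that $s\mapsto s-H(s)=1-G(1-s)$ is nondecreasing, i.e.\ simply that $G$ is nondecreasing on $[0,1]$. Your own identity already contains the answer. Compare $1-x+H(x)=G(1-x)$ as a whole with $G(1-y)$, instead of splitting off $1-x\ge 1-y$ and keeping $H(x)$. Since $x\le y$,
\begin{equation*}
e^{f_T(t)}(1-y)=G(1-x)\ge G(1-y)=1-y+H(y),
\end{equation*}
so $H(y)\le (e^{f_T(t)}-1)(1-y)$. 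Since $1-y=e^{-f_T(t)}G(1-x)\le e^{-f_T(t)}$, this gives $H(y)\le 1-e^{-f_T(t)}\le f_T(t)$. This is what the paper does. It writes an add-and-subtract identity for $H(g_T(t))$ whose dropped terms are nonnegative by exactly this monotonicity together with \eqref{e:gT_ineq}.

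Your fallback through \eqref{e:H_bound_2} would at best give $H(g_T)\le C f_T$. That suffices for \eqref{e:Tg_T_bound} and \eqref{e:limsup_g_T}, but not for \eqref{e:Hg_le_f} with constant $1$. The exact constant matters later: the nonnegativity of $f_T-H(g_T)$ is used in Lemma \ref{lem:lem2} and in the proof of Theorem \ref{thm:alpha_ge_beta}.

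\textbf{The remaining steps are fine.} Once \eqref{e:Hg_le_f} is in hand, your derivation of \eqref{e:Tg_T_bound} is correct and differs slightly from the paper's. The paper first obtains $\sup_t g_T(t)\le \sup_t h_T(t)\le CT^{-\beta}$ from \eqref{e:ineq_g_f}, \eqref{e:hR} and the resolvent bound \eqref{e:IR_a}. Only then does it use $H(x)\ge \frac K2 x^{1+\beta}$ near $0$. You get the smallness directly: $H(g_T(tT))\le T^{-1-\beta}\max f<H(x_0)$, together with monotonicity of $H$, forces $g_T(tT)<x_0$. Here $H(x_0)>0$ already follows from \eqref{e:H_bound} and \eqref{e:limW} for small $x_0$, so no strict-convexity argument is needed. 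This avoids the resolvent entirely. Part b) then goes through as you describe, since the fix above uses only that $G$ is nondecreasing.
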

\proof a)
We start with formula  \eqref{e:g_eqH}. By adding and subtracting $H(g_T(t))$ to both sides  and  then regrouping, we obtain
\begin{align*}
 H(g_T(t))=&1-e^{-f_T(t)}-e^{-f_T(t)}\left(g_T(t)-H_T(g_T(t))
 -\int_0^\infty g_T(t)\varphi(s)ds+H(\int_0^\infty g_T(t)\varphi(s)ds)
 \right)\\
 &-(1-e^{-f_T(t)})\left(g_T(t)-H(g_T(t))\right).
\end{align*}
From the definition of $H$ in \eqref{e:Hdef} and the fact that $G$ is a probability generating function it follows directly that the function $s\mapsto s-H(s)=1-G(1-s)$ is nonnegative and nondecreasing on $[0,1]$. Using this and
 \eqref{e:gT_ineq},  we see that the terms in parentheses involving $g_T$ are nonnegative, therefore 
\begin{equation*}
 H(g_T(t))\le 1-e^{f_T(t)}\le  f_T(t), \qquad \textrm{ for any}\ t\ge 0.
\end{equation*}
This proves  \eqref{e:Hg_le_f}.

Recalling, \eqref{e:FT} and \eqref{e:hR}  we have that 
\begin{equation*}
 h_T(t)\le \frac{C}{T^{1+\beta}}+\frac{C}{T^{1+\beta}} I_R(Ta)
\end{equation*}
where $a$ is given by \eqref{e:a}, that is $\supp{f_T}\subset[0,Ta]$. 
By \eqref{e:ineq_g_f} and \eqref{e:IR_a}
\begin{equation*}
 \sup_{t\ge 0}g_T(t)\le \frac C{T^{\beta}}\to 0, \qquad \textrm{as }\ T\to \infty.
 \label{e:3:22a}
\end{equation*}
By Assumption \ref{ass:beta_general} (cf. \eqref{e:H_bound} and \eqref{e:limW}) there exists $\varepsilon>0$ such that for any $0\le x\le  \varepsilon$ we have $H(x)x^{-1-\beta}\ge \frac 12 K$. Let $T_0$ be sufficiently large, so that $\sup_{T\ge T_0}\sup_{t\ge 0}g_T(t)\le \varepsilon$, then
\begin{equation*}
Tg_T(tT)\le \frac 2K T \left(H(g_T(t))\right)^{1/1+\beta}\le \frac 2K T\left(f_T(tT)\right)^{1/1+\beta}=\frac 2K f^{1/(1+\beta)}(t), \qquad t\ge 0, T\ge T_0.
\end{equation*}
This proves \eqref{e:Tg_T_bound}. 
Note that this in particular implies that 
\begin{equation*}
 \limsup_{T\to \infty}\int_0^\infty g_T(t)dt=\limsup_{T\to \infty}\int_0^\infty Tg_T(tT)dt\le C\int_0^\infty f^{1/1+\beta}(t)dt<\infty.
 \end{equation*}
 Hence \eqref{e:limsup_g_T} is also proved.
 
 \medskip
 Part b) follows with minimal changes. In this case, if we denote by $\rho$ the generic random variable of the number of offspring of a single particle, then if $\rho$ has a finite variance, then it is easy to see that
 \begin{equation}
  \lim_{x\to 0}\frac{H(x)}{x^2}=\frac 12 E\rho (\rho-1)=\frac 12\Var \rho. \label{e:3.21a}
 \end{equation}
 The last equality follows from the fact that we assume that $E\rho=1$. Also, $H(x)$ is bounded by $Cx^2$ for some constant $C>0$. We also have \eqref{e:H_bound_2} with $\beta=1$.
 \qed
 
 \begin{lem}\label{lem:lem2}
 a)
Under the assumptions of Lemma \ref{lem:lem1} a) we have that
   \begin{equation}
   g_T(t)=\int_0^\infty\left(f_T(t+s)-H(g_T(t+s))\right) R(s)ds+Q_T(t)
   \label{e:Q_T}
  \end{equation}
where $Q_T$ has its support in $[0,Ta]$, where $a$ is given by \eqref{e:a} and  
\begin{equation}
 \liminf_{T\to \infty}( \inf_{t\ge 0} TQ_T(t))\ge 0
 \label{e:Q_T_1}
\end{equation}
Moreover, if additionally Assumption \ref{ass:phi_alpha} holds with $\beta=\alpha$, then
\begin{equation}
\limsup_{T\to \infty}(\sup_{t\ge 0}\abs{TQ_T(t)})=0.
 \label{e:Q_T_2}
\end{equation}

b) Under the assumptions of Lemma \ref{lem:lem1} b) and if additionally $\varphi$ satisfies \eqref{e:phi_x_integrable} then \eqref{e:Q_T} and \eqref{e:Q_T_2} are satisfied. 
\end{lem}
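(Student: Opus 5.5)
The plan is to obtain \eqref{e:Q_T} from the representation \eqref{e:gT_VT_2}, by defining $Q_T$ as whatever is left over, and then to bound $Q_T$ from below and from above. Write $P_T(t):=\int_0^\infty g_T(t+s)\varphi(s)ds$, so that $P_T\le g_T$ by \eqref{e:gT_ineq}. Set
\begin{equation*}
D_T:=V_T-f_T+H(g_T)=\bigl(1-e^{-f_T}-f_T\bigr)-\bigl(1-e^{-f_T}\bigr)P_T+H(g_T)-e^{-f_T}H(P_T).
\end{equation*}
Substituting $V_T=f_T-H(g_T)+D_T$ into the integral term of \eqref{e:gT_VT_2} gives \eqref{e:Q_T} with
\begin{equation*}
Q_T(t)=V_T(t)+\int_0^\infty D_T(t+s)R(s)ds .
\end{equation*}
Since $V_T$ and $D_T$ vanish outside $[0,Ta]$ (because $f_T$, $g_T$, $P_T$ do) and $R$ lives on $\Rp$, the support claim follows.

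\textbf{Lower bound \eqref{e:Q_T_1}.} We have $V_T\ge0$. In $D_T$, the term $H(g_T)-e^{-f_T}H(P_T)$ is at least $H(g_T)-H(P_T)\ge0$, by monotonicity of $H$ and $P_T\le g_T$. The remaining two terms satisfy
\begin{equation*}
\bigl(1-e^{-f_T}-f_T\bigr)-\bigl(1-e^{-f_T}\bigr)P_T\ \ge\ -\tfrac12 f_T^2-f_T g_T .
\end{equation*}
Since $f_T\le CT^{-1-\beta}$, and since by \eqref{e:Tg_T_bound} $\sup_t g_T\le C/T$ for $T\ge T_0$, this gives $D_T\ge -CT^{-2-\beta}\ind_{[0,Ta]}$. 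Hence
\begin{equation*}
TQ_T(t)\ge -CT^{-1-\beta}I_R(Ta)\ge -CT^{-\beta},
\end{equation*}
where the last step uses \eqref{e:IR_a}. This tends to $0$ uniformly in $t$.

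\textbf{Upper bound \eqref{e:Q_T_2}.} First, $V_T\le 1-e^{-f_T}\le f_T$, because the other two terms in \eqref{e:def_VT} are nonpositive. So $T\sup_t V_T\le CT^{-\beta}\to0$. For the integral term I only need the positive part of $D_T$, which is at most
\begin{equation*}
\bigl(H(g_T)-H(P_T)\bigr)+\bigl(1-e^{-f_T}\bigr)H(P_T)\le C\,(g_T-P_T)\,g_T^\beta+Cf_T\,g_T^{1+\beta},
\end{equation*}
using \eqref{e:H_bound_2} and \eqref{e:H_bound}. The key observation is that $g_T-P_T=V_T$ by \eqref{e:gT_VT}. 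Therefore \eqref{e:gT_VT_2} gives
\begin{equation*}
\int_0^\infty (g_T-P_T)(t+s)R(s)ds=g_T(t)-V_T(t)\le g_T(t)\le C/T .
\end{equation*}
Combined with $\sup g_T^\beta\le CT^{-\beta}$, the first piece contributes at most $T\cdot CT^{-\beta}\cdot C/T\to0$. The second piece contributes at most
\begin{equation*}
T\cdot CT^{-1-\beta}\cdot T^{-1-\beta}\cdot I_R(Ta)\le CT^{-1-2\beta}\cdot T^{\beta}\to0,
\end{equation*}
where I use \eqref{e:IR} with $\alpha=\beta$ (or just \eqref{e:IR_a}). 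This is where Assumption \ref{ass:phi_alpha} enters, to control $I_R(Ta)$ at the scale $T^\beta$. Together with the lower bound, this yields \eqref{e:Q_T_2}.

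\textbf{Part b).} The argument is the same with $\beta=1$ and $F_T=T^2$. We use $H(x)\le Cx^2$, \eqref{e:H_bound_2} with $\beta=1$, and the bounds from Lemma \ref{lem:lem1} b). The factor $I_R(Ta)\le CT$ now comes from \eqref{e:IR_a_1} under \eqref{e:phi_x_integrable}.

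The main obstacle I expect is the term $\int(H(g_T)-H(P_T))R$. A crude estimate, bounding $g_T-P_T$ by $g_T$ and then multiplying by $I_R(Ta)$, loses a factor $T^\alpha$ and is not good enough. The identity $g_T-P_T=V_T$, which lets the resolvent integral collapse back to $g_T$ itself, is the crucial device here, and I would check that step most carefully.
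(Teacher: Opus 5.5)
Your proof is correct, and the decomposition is the paper's own: your $D_T$ is its $S_T=\sum_{k=1}^4 S_{T,k}$, and since $f_T-H(g_T)+S_T=V_T$, your formula $Q_T=V_T+\int_0^\infty D_T(t+s)R(s)\,ds$ is exactly \eqref{e:QT_def}. The lower bound \eqref{e:Q_T_1} is obtained in essentially the same way. The real difference is in the upper bound.

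The paper also notes $|S_{T,4}|\le C g_T^\beta V_T$, but then bounds $V_T\le f_T$ pointwise. This gives $|S_T|\le CT^{-1-2\beta}$, and integrating against $R$ pays the full factor $I_R(aT)$. That factor is $O(T^\beta)$ only because of \eqref{e:IR} with $\alpha=\beta$, which is why the lemma carries that hypothesis; with just \eqref{e:IR_a}, this route gives $T^{1-2\beta}$, which is useless for $\beta\le 1/2$.

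You instead keep $V_T$ inside the resolvent integral and collapse it through \eqref{e:gT_VT_2}: $\int_0^\infty V_T(t+s)R(s)\,ds=g_T(t)-V_T(t)\le C/T$. So the $S_{T,4}$ contribution never sees $I_R$, and every remaining term is $O(T^{-2-\beta})$ pointwise, needing only $I_R(Ta)\le CT$.

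Consequently your argument proves \eqref{e:Q_T_2} from \eqref{e:critical} and Assumption \ref{ass:beta_general} alone. Your sentence that this is where Assumption \ref{ass:phi_alpha} enters is therefore inaccurate; your own parenthetical ``or just \eqref{e:IR_a}'' already shows it is not needed. Likewise, in b) the bound $I_R(Ta)\le CT$ follows from \eqref{e:IR_a} for every $\varphi$, so \eqref{e:phi_x_integrable} is not used there either.

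In short, your route is sharper and removes hypotheses from this lemma. The paper's single pointwise bound on $S_T$ is simpler but only works when $I_R$ grows at most like $T^\beta$. The extra assumptions are of course still needed elsewhere, e.g. in Step 2 of the proof of Theorem \ref{thm:critical}.
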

\proof
a)
By \eqref{e:gT_VT_2} and \eqref{e:def_VT} we can write 
\begin{equation}
 g_T(t)=f_T(t)-H_T(g_T(t))+\int_0^\infty \left(f_T(t+s)-H(g_T(t+s))\right)R(s)ds
+S_T(t)+\int_0^\infty S_T(t+s)R(s)ds,
\label{e:gT_ST}
 \end{equation}
where 
\begin{align}
 S_T(t)=&V_T(t)-f_T(t)+H(g_T(t))
= \sum_{k=1}^4S_{T,k}(t)
\label{e:S_T_def}
\end{align}
and 
\begin{align}
 S_{T,1}(t)=& 1-e^{-f_T(t)}-f_T(t)\label{e:S_T_1}\\
 S_{T,2}(t)=& -(1-e^{-f_T(t)})\int_0^\infty g_T(t+s)\varphi(s)ds \label{e:S_T_2}\\
  S_{T,3}(t)=& (1-e^{-f_T(t)})H(g_{T}(t))\label{e:S_T_3}\\
   S_{T,4}(t)=& e^{-f_T(t)}\left(H(g_T(t))-H(\int_0^\infty g_T(t+s)\varphi(s) ds)\right).\label{e:S_T_4}
\end{align}
We therefore have 
\begin{equation}
 Q_T(t)=f_T(t)-H_T(g_T(t))
+S_T(t)+\int_0^\infty S_T(t+s)R(s)ds,
\label{e:QT_def}
\end{equation}
Clearly the support of $Q_T$ is in  $[0,Ta]$, since this is the support of $g_T$ and $f_T$.
By \eqref{e:Hg_le_f}, \eqref{e:fT} and \eqref{e:FT}
\begin{equation}
 0\le f_T(t)-H_T(g_T(t))\le f_T(t)\le \frac{C}{T^{1+\beta}}
 \label{e:f_H_estimate}
\end{equation}
Using \eqref{e:Hg_le_f}, \eqref{e:Tg_T_bound}, \eqref{e:gT_ineq} and some elementary estimates we obtain 
\begin{align}
 \abs{S_{T,1}(t)}\le &\frac 12 f_T^2(t)\label{e:S1}\\
  \abs{S_{T,2}(t)}\le & f_T(t)g_T(t)\le C f_T^{1+\frac 1{1+\beta}}(t)\label{e:S2}\\
  \abs{S_{T,3}(t)}\le  & f_T(t)H(g_T(t))\le f_T^2(t)\label{e:S3}\\
  \abs{S_{T,4}(t)}\le  & C f_T(t).\label{e:S4}
\end{align}
Since $f_T$ is bounded, the above estimates imply
\begin{equation*}
 \abs{S_T(t)}\le C_1 f_T(t).
 \end{equation*}
But the above, \eqref{e:S_T_def} and \eqref{e:Hg_le_f} imply that 
\begin{equation*}
 V_T(t)\le f_T(t).
\end{equation*}
Using \eqref{e:H_bound_2} and \eqref{e:gT_VT} we can get a finer estimate of $\abs{S_{T,4}(t)}$, namely from \eqref{e:S_T_4} we obtain
\begin{align} \abs{S_{T,4}(t)}\le  & C g_T^\beta(t)V_T(t)
 \le C_1 f_T^{\frac \beta{1+\beta}+1}.
 \label{e:S4_alt}
\end{align}
Since $f_T(t)$ is bounded, the estimates \eqref{e:S1}-\eqref{e:S3} and \eqref{e:S4_alt} imply that
\begin{equation}
\abs{ S_T(t)}\le C f_T^{\frac {1+2\beta}{1+\beta}}(t)=C\frac 1{T^{1+2\beta}}f^{\frac {1+2\beta}{1+\beta}}(\frac tT).
 \label{e:S_alt}
\end{equation}
From \eqref{e:S1}-\eqref{e:S3}
We also have
\begin{equation}
 \sup_{t\ge0}\sum_{j=1}^3\abs{S_{T,j}(t)}\le \frac C{T^{2+\beta}}
 \label{e:S_1_3_a}
\end{equation}
hence there exists $T_0$ such that for $T\ge T_0$ we have
\begin{equation}
  \sup_{t\ge 0}T\int_0^\infty \sum_{j=1}^3\abs{S_{T,j}(t+s)}R(s)ds\le
 \frac C{T^{1+\beta}}I_R(aT)\le C_1\frac {a}{T^\beta}.
\label{e:S_1_3}
 \end{equation}
 where in the last estimate we have used \eqref{e:IR_a}. From the part \eqref{e:H_bound_2} of Assumption \ref{ass:beta_general}
and the fact that in \eqref{e:gT_VT} $V_T$ is nonnegative it follows that the 
 term $S_{T,4}$ is nonnegative. Thus \eqref{e:Q_T_1} follows from \eqref{e:QT_def}, \eqref{e:f_H_estimate}, \eqref{e:S_alt}, \eqref{e:S_1_3_a} and \eqref{e:S_1_3}.

 To verify \eqref{e:Q_T_2} we observe that we have a better estimate of  $I_R(aT)$ following from \eqref{e:IR}. In this case, using also \eqref{e:S_alt}, we see that there exists $T_0\ge 1$ such that for all $T\ge T_0$ we have
  \begin{equation}
  T\int_0^\infty \abs{S_T(t+s)}R(s)ds\le C\frac{1}{T^{2\beta}}I_R(aT)
  \le C_1 \frac {a^\beta}{T^\beta}, \qquad t\ge 0.
  \label{e:3.40a}
 \end{equation}
\eqref{e:Q_T_2} now follows from \eqref{e:QT_def}, \eqref{e:f_H_estimate}, \eqref{e:S_alt} and \eqref{e:3.40a}.

Part b) follows in the same way if we set $\beta=1$. In \eqref{e:3.40a} we now use \eqref{e:IR_a_1}.
 \qed
 
\subsection{Proof of Theorem \ref{thm:alpha_ge_beta}}
First we will show convergence of finite dimensional distributions and then tightness in $M_1$ topology of $D([0,\infty))$. 

\textbf{Convergence of finite dimensional distributions}
To prove convergence of finite dimensional distributions we will use the method of Laplace transforms. Recall Proposition \ref{prop:Laplace}. By the standard argument, it is enough to show that for any $f$ of the form \eqref{e:f_sum} with all $\theta_k$ nonnegative we have
\begin{equation}
\lim_{T\to \infty} E \exp\{-\<N,f_T\>\}=\exp\{-
{K^{-\frac 1{1+\beta}}}
\int_0^\infty(f(t))^{\frac 1{1+\beta}}dt\},
\label{e:Lapl_alpha_beta}
\end{equation}
where
$f_T$ is defined in \eqref{e:fT}. Recalling \eqref{e:Laplace_2} and the form of the constant $K_1$, this amounts to proving that
\begin{equation}
 \lim_{T\to\infty}\int_0^\infty g_T(t)dt=
 K^{-\frac 1{1+\beta}}\int_0^\infty(f(t))^{\frac 1{1+\beta}}dt
 \label{e:lim_g_beta}
\end{equation}
From \eqref{e:Tg_T_bound} and \eqref{e:Q_T_1} we have 
\begin{align*}
\infty> &\limsup_{T\to \infty}\int_0^\infty g_T(t)dt\\
\ge & \limsup_{T\to \infty }\int_0^\infty\int_0^\infty \left(f_T(t+s)-H(g_T(t+s))\right)R(s)ds +\liminf_{T\to \infty}\int_0^\infty TQ_T(Tt)dt\\
=&\limsup_{T\to \infty }\int_0^\infty \left(f_T(t)-H(g_T(t))\right)I_R(t)dt\\
=&\limsup_{T\to \infty }\frac 1{T^\beta}\int_0^\infty \left(f(t)- T^{1+\beta}H(g_T(tT))\right)I_R(tT)dt.
\end{align*}
Fix $\varepsilon>0$ and consider $\delta$ given by Assumption \ref{ass:beta_integrable}. Without loss of generality we may assume that $\delta+\beta\le 1$.
By \eqref{e:IR_b}
there exists $C>0$ and $T_0\ge 1$ such that for any $T$ such that $T\varepsilon\ge T_0$ and $t\ge \varepsilon$
\begin{equation*}
 I_R(Tt)\ge I_R(T\varepsilon)\ge C (T\varepsilon )^{\beta+\delta}.
\end{equation*}
Therefore, for any $\varepsilon>0$
\begin{align*}
 \infty>\limsup_{T\to \infty }C T^\delta \varepsilon^{\beta+\delta}\int_\varepsilon^\infty \left(f(t)- T^{1+\beta}H(g_T(tT))\right)dt.
\end{align*}
Since $T^\delta$ goes to infinity, this implies that for any $\varepsilon>0$ we have
\begin{equation}
 \limsup_{T\to \infty}\int_\varepsilon^\infty \left(f(t)- T^{1+\beta}H(g_T(tT))\right)dt=0.
\label{e:lim_g_T_beta}
 \end{equation}
 Also note, that since $f(t)- T^{1+\beta}H(g_T(tT)$ is nonnegative and 
 \begin{equation*}
\int_0^\varepsilon\abs{f(t)- T^{1+\beta}H(g_T(tT)} dt\le  \varepsilon                      \sup_{t\ge 0}\abs{f(t)}
\end{equation*}
\eqref{e:lim_g_T_beta} implies that
\begin{equation}
 \limsup_{T\to \infty}\int_0^\infty \left(f(t)- T^{1+\beta}H(g_T(tT))\right)dt=0.
 \label{e:limsup_f_H}
\end{equation}
We will show that \eqref{e:limsup_f_H} implies that
\begin{equation}
 \label{e:lim_int_g_T}\int_0^\infty K^{\frac 1{1+\beta}}Tg_T(Tt)dt=\int_0^\infty f^{\frac 1{1+\beta}}(t)dt.
\end{equation}
Indeed, first recall that the supports of $f$ and $g_T(T\cdot)$ are in $[0,a]$. Using 
$(x+y)^{1/(1+\beta)}\le x^{1/(1+\beta)}+ y^{1/(1+\beta)}$ for any $x,y\ge 0$, Jensen's inequality and then \eqref{e:limsup_f_H} we obtain
\begin{align}
\limsup_{T\to \infty}\int_0^\infty \abs{f^{\frac 1{1+\beta}}(t)- T\left(H(g_T(Tt))^{\frac 1{1+\beta}}\right)}dt
 \le&\limsup_{T\to \infty} \int_0^a 
 \left(f(t)-T^{1+\beta}H(g_T(Tt))\right)^{\frac 1{1+\beta}}dt\notag \\
\le &\limsup_{T\to \infty} a^{\frac \beta{1+\beta}} \left(\int_0^a 
 \abs{f(t)-T^{1+\beta}H(g_T(Tt))\right)dt}^{\frac 1{1+\beta}}\notag\\
 =&0.
 \label{e:3.33}
\end{align}
On the other hand, using the condition \eqref{e:Hg_le_f} and \eqref{e:H_bound}  (cf. Assumption \ref{ass:beta_general}) we obtain
\begin{equation}
 \int_0^\infty\abs{T(H(g_T(Tt)))^{\frac 1{1+\beta}}-K^{\frac 1{1+\beta}}(Tg_T(Tt))}dt
\le \int_0^\infty f^{\frac 1{1+\beta}}(t)K^{\frac 1{1+\beta}}\abs{\left(W(g_T(Tt))\right)^\frac 1{1+\beta}-1}dt\to 0.
\label{e:3.34}
\end{equation}
The last convergence follows by the dominated convergence theorem and \eqref{e:Tg_T_bound}.
Combining \eqref{e:3.33} and \eqref{e:3.34} finishes the proof of \eqref{e:lim_int_g_T} and of the convergence of finite dimensional distributions.

\textbf{Convergence in $D([0,\infty)$ with $M_1$ topology.} 

The proof proceeds in the standard way. Fix any sequence $T_n\to \infty$ as $n\to \infty$.  Since the limit process does not have any fixed points of discontinuity, it suffices to show convergence of $Y_{T_n}$ in law in $ D([0,M])$ equipped with $M_1$ topology for any $M>0$ (cf. \cite{Whitt}, Sec. 3.3 and 12.9). 
To this end it is enough to have convergence of finite dimensional distributions (already proved) and tightness in $M_1$ in $D([0,M])$ (see \cite{Whitt} Theorem 11.6.6).

To prove tightness in $M_1$ topology we use  Theorem 12.12.3 in \cite{Whitt}, see also Theorem 5 in section 3 of \cite{Kern}, where the same theorem is reformulated in a more friendly form.  We observe that since the trajectories of $Y_T$ are nondecreasing, there are no oscillations, also, $Y_T(0)=0$, therefore the conditions of Theorem 12.12.3 in \cite{Whitt} (see also Thm 3.5 in \cite{Kern}) simplify. It suffices to prove
 that for any sequence $T_n\to \infty$ and $\varepsilon>0$ we have
\begin{align}
 \lim_{r\to \infty}&\limsup_{n\to\infty}P(Y_{T_n}(M)\ge r)=0\label{e:3.19a}\\
 \lim_{\delta\searrow 0}&\limsup_{n\to \infty}P(Y_{T_n}(\delta)>\varepsilon)=0\label{e:3.19b}\\
 \lim_{\delta\searrow 0}&\limsup_{n\to \infty}P(Y_{T_n}(M-)-Y_{T_n}(M-\delta)>\varepsilon)=0.\label{e:3.19c}
\end{align}
We have already shown that nondecreasing processes $Y_{T_n}$ converge in the sense of finite dimensional distributions. Also, it is known that the $\frac 1{1+\beta}$-stable subordinator is stochastically continuous. These two facts imply \eqref{e:3.19a}-\eqref{e:3.19c}. \eqref{e:3.19a} follows from tightness of $(Y_{T_n}(M))_n$. To prove \eqref{e:3.19c} we may assume that $\varepsilon<1$ and we use 
 \begin{align}
 \limsup_{n\to \infty}P(Y_{T_n}(M-)-Y_{T_n}(M-\delta)>\varepsilon)&\le 
  \limsup_{n\to \infty}P(Y_{T_n}(M)-Y_{T_n}(M-\delta)>\varepsilon)\notag\\
    &\le \limsup_{n\to \infty} \frac{E(Y_{T_n}(M)-Y_{T_n}(M-\delta))\wedge 1}{\varepsilon}\notag\\
    &=\frac{E(K^{-1}(L_{1/(1+\beta)}(M)- L_{1/(1+\beta)}(M-\delta)))\wedge 1}{\varepsilon}.
    \label{e:EL}
 \end{align}
In the last equality we have used the convergence of $2$-dimensional distributions of  the processes $Y_{T_n}$. 
By stochastic continuity of the process $L_{1/(1+\beta)}$, the term in \eqref{e:EL} converges to $0$ as $\delta\to 0$.
This finishes the proof of \eqref{e:3.19c}. \eqref{e:3.19b} is shown in the same way.

\medskip

\qed

\subsection{Proof of 
Theorem \ref{thm:critical}}
As in the proof of Theorem \ref{thm:alpha_ge_beta} we will show the convergence of finite dimensional distributions using Laplace transforms and then tightness in $M_1$ topology of the the Skorokhod space. This time the proof of convergence of finite dimensional distributions is more involved since we also have to prove existence of the limit process, and its Laplace transform is more complicated.

\textbf{Scheme of the proof.}
Recall Proposition \ref{prop:Laplace} and the notation used there. Now $F_T=T^{1+\beta}$. As before, we consider functions $f$  of the form \eqref{e:f_sum} with $\theta_k$ nonnegative for all $k=1,2,\ldots, m$ and $a$ is given by \eqref{e:a}.

We will show that for any $f$ of this form the equation \eqref{e:g_infty} has a unique nonnegative bounded solution  $v_f$ with compact support and 
we have
\begin{equation}
 \lim_{T\to \infty}\int_0^\infty g_T(t)dt=\int_0^\infty v_f(t)dt.
 \label{e:conv_int}
\end{equation}
Note that $v_f$ depends  on $\theta_1,\ldots, \theta_m$. To conclude existence of the process $Y_\infty$ and the desired convergence of finite dimensional distributions it then suffices to show that 
the function 
\begin{equation*}
(\theta_1,\ldots, \theta_k)\mapsto \int_0^\infty v_f (t)dt,
\end{equation*}
is continuous at ${\bf 0}=(0,\ldots,0)$. By \eqref{e:Laplace_2} this implies that the distributions of $(Y_{T}(t_1),\ldots, Y_T(t_k))$ are tight because the Laplace transforms converge to a function that is continuous at zero.

To prove \eqref{e:conv_int} we will use the substitution:
\begin{equation*}
 \int_0^\infty g_T(t)dt=\int_0^\infty Tg_T(Tt)dt
\end{equation*}
and we will show the uniform convergence
\begin{equation}\lim_{T\to \infty}\sup_{t\in[0,a]}\abs{Tg_T(Tt)-v_f(t)}=0.
\label{e:uniform}
\end{equation} 
\eqref{e:uniform} will also imply that $Tg_T(Tt)$ converges  to $v_f$ uniformly on $[0,\infty)$, since $Tg_T(T\cdot)$ has its support in $[0,a]$. This will prove \eqref{e:conv_int}.

We split the proof into several steps.

In Step 1 we will show that 
\begin{equation}
 Tg_T(Tt)=\int_0^\infty \left(f(t+s)-K\left(Tg_T(T(t+s))\right)^{1+\beta}\right)T^{1-\beta}R(Ts)ds +Z_T(t),
 \label{e:Z_T}
\end{equation}
where $Z_T$ has its support in $[0,a]$ and converges uniformly to $0$ as $T\to \infty$. It is also known (cf. e.g. Lemma 4.1 in \cite{AT}) that under Assumption \ref{ass:phi_alpha} with $\alpha=\beta$ the measure with density $T^{1-\beta}R(Ts)$ converges weakly to measure with density $\beta c_{\beta,\varphi} s^{\beta-1}$, therefore it becomes clear why the limit should be of the form described in the theorem, but we need to justify it.

In Step 2 we discuss the behavior of the first part of the integral on the right hand side of \eqref{e:Z_T}.

In Step 3 we  consider $\varphi$ of a particular form, 
for which $R(s)=Cs^{\beta-1}$, which  simplifies the estimates.
We  show that for any sequence $T_n\to \infty$ the functions ${T_n}g_{T_n}(T_n\cdot)$ form a Cauchy sequence in the Banach space of Borel bounded functions on $[0,a]$, and therefore converges uniformly to some bounded function $v_f$.

In Step 4 we  verify that $v_f$ obtained in Step 3 satisfies the equation \eqref{e:g_infty}. We  also show that it is continuous in $t$ variable and it is  continuous at ${\bf 0}$ as a function of $(\theta_1,\ldots, \theta_m)$. Additionally we prove uniqueness of solutions of \eqref{e:g_infty}.

In Step 5 we return to the case of general $\varphi$ and show \eqref{e:uniform}. The fact that we already have the candidate $v_f$ for the limit makes it possible to prove the convergence \eqref{e:uniform}.
This will conclude the proof of finite dimensional distributions.

Finally, in Step 6 we discuss tightness in the Skorokhod space.

\noindent\textbf{Step 1.} Proof of \eqref{e:Z_T} with $Z_T$ converging to $0$ uniformly on $[0,a]$:

By
Lemma \ref{lem:lem2} a) we have
\begin{equation}
 g_T(t)=\int_0^\infty \left(f_T(t+s)-K(g_T(t+s))^{1+\beta}\right)R(s)ds +Q_T(t)+\tilde Q_T(t),
 \label{e:g_T_tilde}
\end{equation}
where 
\begin{equation*}
 \tilde Q_T(t)=\int_0^\infty \left(K g_T^{1+\beta}(t+s)-H(g_T(t+s))\right)R(s)ds,
\end{equation*}
and $Q_T$ satisfies \eqref{e:Q_T_2}.

Recall that $f$ is bounded and has the support in $[0,a]$ and that by \eqref{e:Tg_T_bound} there exists $C>0$ and $T_0\ge 1$ such that for all  $T\ge T_0$ we have 
\begin{equation}
 \sup_{u\ge 0}g_T(u)\le \frac{C}{T}.
 \label{e:g_bound_2}
\end{equation}
By \eqref{e:H_bound}, which is a part of Assumption \ref{ass:beta_general}, we obtain that for  all $T\ge T_0$  we have  
\begin{equation*} 
 T\tilde Q_T(t)\le 
 \frac C{T^\beta}\int_0^{Ta}\abs{1-W(g_T(t+s))}R(s) ds.
\end{equation*}
Using again \eqref{e:g_bound_2}, the fact that $W$ is bounded and $W(x)$ converges to $1$ as $x\to 0$ and \eqref{e:IR}
we see that $\tilde Q_T$ also has property \eqref{e:Q_T_2}. Hence \eqref{e:Z_T} follows with 
\begin{equation*}Z_T(t)=T\left(Q_T(Tt)+\tilde Q(Tt)\right)
\end{equation*}
uniformly convergent to $0$ as $T\to \infty$.

\noindent
\textbf{Step 2.}
From  Lemma 4.1 in \cite{AT}, recalling that now   $\alpha=\beta$, we already know that 
\begin{equation}
\lim_{T\to \infty} \int_0^\infty f(t+s)T^{1-\beta}R(Ts)ds=\beta c_{\beta,\varphi}\int_0^\infty f(t+s)s^{\beta-1}ds.
\label{e:conv_f}
\end{equation}
We want to also show that this convergence is uniform in $t\in[0,a].$
Since $f$ is a step function \eqref{e:f_sum}, by triangle inequality, it suffices to consider each of the terms separately. We have
\begin{align*}
 \sup_{t\in[0,a]}&\abs{\int_0^\infty T^{1-\beta}\ind_{[0,t_k]}(t+s)R(Ts)ds-c_{\beta,\varphi}\int_0^\infty \ind_{[0,t_k]}(t+s)s^{\beta-1}ds}\\
 &=\sup_{t\in[0,a]}\abs{T^{-\beta}I_R((t_k-t)T)-c_{\beta,\varphi}(t_k-t)^\beta}\ind_{[0,t_k]}(t)\\
 &\le \sup_{t\in[0,a]}\abs{T^{-\beta}I_R(Tt)-\beta c_{\beta,\varphi} t^\beta}.
\end{align*}
By \eqref{e:IR},
for any $\varepsilon>0$ there exists $T_0$ such that if $Tt\ge T_0$ then $\abs{(Tt)^{-\beta}I_R(Tt)-c_{\beta,\varphi}}\le \frac \varepsilon{a^\beta}$. By considering separately the cases $t< \frac T{T_0}$ and $t> \frac T{T_0}$ we obtain
\begin{equation*}
 \sup_{t\in[0,a]}\abs{T^{-\beta}I_R(Tt)-c_{\beta,\varphi} t^\beta}
 \le T^{-\beta}I_R(T_0)+c_{\beta,\varphi} \left(\frac{T_0}T\right)^\beta + \varepsilon
\end{equation*}
This shows that the  convergence in \eqref{e:conv_f} is uniform in $t$.

\noindent\textbf{Step 3.}
Now we temporarily assume that $\varphi$ is of a particular form, 
\begin{equation}
 \varphi(t)=\lambda t^{\beta-1}E_{\beta,\beta}(-\lambda t^\beta), \qquad t\ge 0
 \label{e:Mittag_Leffler_varphi}
\end{equation}
for some parameter $\lambda>0$, where 
$E_{\beta,\beta}$ is the Mittag-Leffler function
\begin{equation*}
 E_{\beta,\beta}(t)=\sum_{k=0}^\infty \frac{t^k}{\Gamma(\beta+\beta k)}.
\end{equation*}
With this choice of $\varphi$ we have $R(s)=\beta c_{\beta,\varphi} s^{\beta-1}$ (see Section 2.4.1 of  \cite{HorstXu}), which simplifies the argument.
By varying $\lambda$, we see that   $c_{\beta,\varphi}$ may be an arbitrary positive number.
Then we discuss existence and uniqueness of solutions \eqref{e:g_infty}.

We will show that for any sequence $T_n\to \infty$ the functions
$T_ng_{T_n}(T_n\cdot)$ form a Cauchy sequence in the Banach space of Borel bounded functions on $[0,a]$ equipped with the supremum norm, and therefore the sequence converges uniformly. Note that the functions $T_ng_{T_n}(T_n\cdot)$ vanish outside $[0,a]$.

Fix any $T_j,T_l\ge 1$,  and denote
\begin{equation*}
 A_{T_j,T_l}(t):=\abs{T_j g_{T_j}(T_jt)-T_lg_{T_l}(T_lt)}.
\end{equation*}
By
\eqref{e:Z_T}, the uniform convergence of $Z_T$ to $0$, \eqref{e:conv_f} and the fact that now $T^{1-\beta}R(Ts)=R(s)=\beta c_{\beta,\varphi}$ we have
\begin{equation*}
A_{T_j,T_l}(t)\le B(T_j,T_l)
+K\beta c_{\beta,\varphi}\int_0^\infty \abs{(T_jg_{T_j}(T_j(t+s))^{1+\beta}- (T_lg_{T_l}(T_l(t+s)))^{1+\beta}}s^{\beta-1}ds
 \end{equation*}
where
\begin{equation}
 \lim_{T_j,T_l\to \infty} B(T_j,T_l)=0.
 \label{e:B_conv}
\end{equation}
Using also a trivial estimate 
\begin{equation}\abs{x^{\beta+1}-y^{\beta+1}}\le (1+\beta)(x^\beta+y^\beta)\abs{x-y}\quad \textrm{for}\ x,y\ge 0 
 \label{e:x_y_beta}
\end{equation}
we obtain 
\begin{align}
 A_{T_j,T_l}(t)\le&
  B(T_j,T_l)+C_1\int_0^{a-t} A_{T_j,T_l}(t+s)s^{\beta-1}ds,
  \label{e:60}
\end{align}
for some positive constant $C_1$ and all $t\in [0,a]$ ($A_{T_j,T_l}$ vanishes outside $[0,a]$). This allows to use techniques similar to the proof of the Gronwall's lemma to verify that $A_{T_j, T_l}$ converges to $0$ uniformly on $[0,a]$ as $T_j,T_l\to \infty$.

Let $\sigma>0$, to be chosen later. Then
\begin{align*}
\sup_{t\in [0,a]} e^{-\sigma (a-t)} A_{T_j,T_l}(t)\le& B(T_j,T_l)+C_1\sup_{t\in[0,a]}\int_0^{a-t}{ e^{-\sigma (a-t-s)} A_{T_j,T_l}(t+s)}
e^{-\sigma s}s^{\beta-1}ds\\
\le &B(T_j,T_l)+C_1\left(\sup_{t\in[0,a]} e^{-\sigma (a-t)} A_{T_j,T_l}(t)\right)\int_0^\infty
e^{-\sigma s}s^{\beta-1}ds
\end{align*}
We can choose $\sigma$ large enough, so that $\kappa(\sigma,\beta):=C_1\Gamma(\beta)/\sigma^\beta<1$ and then we obtain
\begin{equation*}
 e^{-\sigma a}\sup_{t\in[0,a]}A_{T_j,T_l}(t)\le 
 \sup_{t\in[0,a]}e^{-\sigma(a-t)}A_{T_j,T_l}(t) \le\frac{B(T_j,T_l)}{1-\kappa(\sigma,\beta)}.
\end{equation*}
Therefore, by \eqref{e:B_conv}
\begin{equation*}
 \lim_{T_j,T_l\to \infty}A_{T_j,T_l}(t)=0.
\end{equation*}
This shows that in this special case, $Tg_T(T\cdot)$  is Cauchy in the space of Borel bounded functions on $[0,a]$, therefore it converges uniformly on $[0,a]$ to some $v_f$, and also uniformly on $[0,\infty)$ if we set $v_f(t)=0$ for $t>a$. The limit $v_f$ is bounded since $Tg_T$ were bounded uniformly in $T$.

We will now show that $v_f$ satisfies  the equation \eqref{e:g_infty},
and it is the unique bounded function with bounded support that satisfies \eqref{e:g_infty}.

Using \eqref{e:Z_T}, the fact that $Z_T$ converges uniformly to $0$  and the uniform convergence in \eqref{e:conv_f} we only have to show that
\begin{equation*}
 \sup_{t\in[0,a]}K\int_0^\infty \abs{Tg_T(T(t+s))^{1+\beta}-(v_f(t+s))^{1+\beta}}c_{\beta,\varphi} s^{\beta}=0,
\end{equation*}
but this is clear by using again the estimate \eqref{e:x_y_beta} and the fact that $Tg_T$ is bounded independently of $T$ and $v_f$ is also bounded.
This proves that $v_f$ satisfies the equation \eqref{e:g_infty}.

Uniqueness follows in a very similar way to the proof of the Cauchy condition above. It is enough to take two solutions $v_f$ and $v_f^*$ and estimate the difference in the similar way as it was done for $A_{T_j,T_l}$ above. We skip this obvious argument.

Also, the same argument shows that 
the function $(\theta_1,\ldots, \theta_m)\mapsto \int_0^\infty v_f(t)dt$ is continuous at zero.
Since $v_f$ is also bounded and has compact support, the Laplace transforms of $(Y_T(t_1),\ldots, Y_T(t_m))$ converge to a function that is continuous at $\bf 0$, thus proving convergence of finite dimensional distributions to those of the process of $Y_\infty$.

\medskip

We will now show that $v_f$ is continuous in the $t$ variable. This will be useful in the case of general $\varphi$. 
By uniqueness we already know that if $f$ has its support in $[0,a]$, then so does $v_f$.
Let $0\le u\le t\le a$. 
From the equation \eqref{e:g_infty} we obtain
\begin{align*}
 v_f(t)-v_f(u)=&\beta c_{\beta,\varphi}\int_{t-u}^\infty\left(f(u+s)-K(v_f(u+s))^{1+\beta}\right)\left((s-(t-u))^{\beta-1}-s^{\beta-1}\right)ds \\
 &-\beta c_{\beta,\varphi}\int_0^{t-u}\left(f(u+s)-K(v_f(u+s))^{1+\beta}\right)s^{\beta-1}ds.
\end{align*}
Since both $f$ and $v_f$ are bounded with the support in $[0,a]$, then 
\begin{align*}
 \abs{ v_f(t)-v_f(u)}\le &C\left[\int_{(t-u)}^{(a-u)}
 \left((s-(t-u))^{\beta-1}-s^{\beta-1}\right)ds+\int_0^{t-u}s^{\beta-1}ds\right]\\
 \le & C_1 \left[(a-t)^\beta-(a-u)^\beta +2(t-u)^\beta\right]\\
 \le & C_1 2(t-u)^\beta.
\end{align*}
The continuity follows.

\textbf{Step 4.}
Having proved that the equation \eqref{e:g_infty} has a unique solution and the solution is continuous in $t$, as well as existence of the process $Y_\infty$, we now return to the case of general $\varphi$ as in the Assumption \ref{ass:phi_alpha} with $\alpha=\beta$.

By \eqref{e:Z_T} and the uniform convergence in \eqref{e:conv_f} we have
\begin{align*}
\abs{Tg_T(Tt)-v_f(t)}\le &B(T) + \abs{\int_0^\infty (Tg_T(T(t+s)))^{1+\beta}T^{1-\beta}R(Ts)ds -\int_0^\infty v_f^{1+\beta}(t+s)\beta c_{\beta,\varphi} s^{\beta-1}ds}\\
\le & B(T)+C\int_0^\infty\abs{Tg_T(T(t+s)-v_f(t+s))}T^{1-\beta}R(Ts)ds
+D(T),
\end{align*}
where $B(T)$ converges to $0$ as $t\to \infty$ and 
\begin{equation*}
D_T=\sup_{t\ge 0}\abs{\int_{0}^\infty v_f^{1+\beta}(t+s) T^{1-\beta}R(Ts)ds -\int_0^\infty v_f^{1+\beta}(t+s) \beta c_{\beta,\varphi} s^{\beta-1}ds }
\end{equation*}
Since $v_f$ is a continuous function with compact support we can approximate it by step functions and use the uniform convergence in \eqref{e:conv_f} proved for step functions to see that $D(T)$ also converges to $0$ as $T\to \infty$.
The rest of the argument is similar to the one in Step 3. (cf. \eqref{e:60})
since we can choose $\sigma$ so that 
$\sup_{T\ge0}\int_0^a e^{-\sigma s}T^{1-\beta}R(sT)ds<1$,
Here we use that
\begin{equation*}
\lim_{T\to \infty} \int_0^ae^{-\sigma s}T^{1-\beta}R(Ts)ds
=\int_0^a e^{-\sigma s}\beta c_{\beta,\varphi} s^{\beta-1}ds,
\end{equation*}
by Lemma 4.1 in \cite{AT}. 

\noindent \textbf{Step 5.}

Tightness in $M_1$ follows in the same way as in the proof of Theorem
\ref{thm:alpha_ge_beta}. We have only have to prove that $Y_\infty$ is stochastically continuous. 
Then \eqref{e:3.19a}-\eqref{e:3.19c} follow in exactly the same way as in Theorem \ref{thm:alpha_ge_beta}, thus giving tightness.

To verify the stochastic continuity of $Y_\infty$ it suffices to show that for any $r_n\to r$ we have that $(Y_\infty(r_n),Y_\infty(r))$ converges in law to $(Y_\infty(r),Y_\infty(r))$, as $n\to \infty$. Let us fix $\theta_1,\theta_2>0$ and denote 
\begin{equation*}f_n=\theta_1\ind_{[0,r_n]}+\theta_2\ind_{[0,r]},\qquad f=(\theta_1+\theta_2)\ind_{[0,r]}.
\end{equation*}
Moreover, let $b$ be such that $b=\sup_n r_n$.
We have to show that
\begin{equation}
\lim_{n\to \infty} \int_0^\infty v_{f_n}(u)du=\int_0^\infty v_f(u)du.
 \label{e:lim_f_n}
\end{equation}
All $v_{f_n}$ and $v_f$ vanish outside $[0,b]$ and for $t\le b$  
we have that
\begin{equation*}
 \abs{\beta\int_0^\infty (f_n(t+s)-f(t+s))s^{\beta-1} ds}
 =\theta_2\abs{(r_n-t)^\beta_+-(r-t)^\beta_+}\le \theta_2 \abs{r_n-r}^\beta
\end{equation*}
From the equation \eqref{e:g_infty} it follows also that 
\begin{equation*}
 v_{f_n}(t)\le \beta c_{\beta,\varphi}\int_0^\infty f_n(t+s)s^{\beta-1}
\end{equation*}
and a similar inequality also holds for $f$, therefore $v_{f_n}$ and $v_f$ are uniformly bounded by the same constant. Hence
by equation \eqref{e:g_infty} we have 
\begin{equation*}
 \abs{v_{f_n}(t)-v_f(t)}\le C\abs{r_n-r}^\beta +C\int_0^{b-t}
 \abs{v_{f_n}(t+s)-v_n(t+s)}s^{\beta-1}, \qquad \textrm{for}\ t\le b. 
\end{equation*}
Again, the same argument as in Step 3 shows that $v_{f_n}$ converges uniformly to $v_f$, thus \eqref{e:lim_f_n} holds, since all of these functions are supported on $[0,b]$. This finishes the proof of stochastic continuity of $Y_\infty$ and of convergence in $M_1$.
\qed

\subsection{Proof of Theorem \ref{thm:weakly_critical}}
The proof is almost the same as that of Theorem \ref{thm:critical} with only minor changes, therefore we skip it. We only indicate the main changes: in Step 1 we use parts b) of Lemmas \ref{lem:lem1} and \ref{lem:lem2}; in Step 2 the fact that now 
$\lim_{T\to \infty } T^{-1}I_R(T)=\frac 1{m_{\varphi}}$; in Step 3 we can take $\varphi$ to be the density of an exponential distribution with parameter $\lambda$. We omit the details. Note also that in the present case the proof may be  simplified, since we already know that the equation \eqref{e:g_infty_2} has a unique bounded nonnegative solution which follows directly from Theorem 4.2 in \cite{Zenghu_Li}. This follows since \eqref{e:g_infty_2} is equivalent to the fact that $u_f(r)=v_f(a-r)$ satisfies (4.8) in \cite{Zenghu_Li} with $t=a$. We also already know that the process $Y^*_\infty$ with the given Laplace multidimensional Laplace transform exists (see Remark \ref{rem:weakly_critical}).\qed 

\subsection {Proof of representation \eqref{e:r1} and \eqref{e:r2} in Remark \ref{rem:weakly_critical}}
Let us consider first the case $\sigma_G=1$ (as for the not marked Hawkes process). From Theorem 2.11 in \cite{HorstXu} we have that the limit process described by \eqref{e:Lapl_lim_critical_2} and \eqref{e:g_infty_2} and $\sigma_G=1$ is the same as the one in Theorem 2.10 of \cite{HorstXu}. Recalling \eqref{e:a} above and   (2.15) in \cite{HorstXu}, in the notation of \cite{HorstXu} it is enough to set 
$T:=a$, $w([0,t]):=-\int_0^t f(a-s)ds$, $g:=0$. Then $V^*$ in (2.16) of \cite{HorstXu} corresponds to 
\begin{equation*}
 V^*(t)=-v_f^*(a-t).
\end{equation*}
To obtain the analogous result in the case of arbitrary $\sigma_G$ it  suffices to observe that $\tilde v_f=\sigma_G v_f^*$ satisfies the equation
\begin{equation*}
 \tilde v_f(t)=\frac {\sigma_G}{m_\varphi}\int_0^\infty \left(f(t+s)-\frac 12 (\tilde v_f(t+s))^2\right) ds.
\end{equation*}
Moreover, the Laplace transform in \eqref{e:Lapl_lim_critical_2} is of the form
\begin{equation*}
 \exp\left\{-\frac \mu{\sigma_G}\int_0^a\tilde v_f(t)dt\right\}.
\end{equation*}
Hence we can use the equivalence stated above for $\sigma_G=1$ and new $\tilde \mu=\frac \mu{\sigma_G}$ and $\tilde m_\varphi=\frac {m_\varphi}{\sigma_G}$.
\qed

\bibliographystyle{plain}

\end{document}